\theoremstyle{definitions}
\newtheorem{theorem}{Theorem}
\newtheorem*{remark}{Remark}
\newtheorem*{note}{Note}
\begin{document}
\preprint{V.M.}
\title{On Lebesgue Integral Quadrature
}
\author{Vladislav Gennadievich \surname{Malyshkin}} 
\email{malyshki@ton.ioffe.ru}
\affiliation{Ioffe Institute, Politekhnicheskaya 26, St Petersburg, 194021, Russia}

\date{July, 7, 2018}

\begin{abstract}
\begin{verbatim}
$Id: LebesgueQuadratures.tex,v 1.160 2020/02/22 19:25:15 mal Exp $
\end{verbatim}
A new type of quadrature is developed.
The Gaussian quadrature, for a given measure,
finds optimal values of a function's argument (nodes)
and the corresponding weights.
In contrast, the Lebesgue quadrature developed in this paper,
finds optimal values of function (value--nodes) and the corresponding weights.
The Gaussian quadrature groups sums by function argument; it can be viewed as a $n$--point discrete measure, producing the Riemann integral.
The Lebesgue quadrature groups sums by function value; it can be viewed as a $n$--point discrete distribution, producing the Lebesgue integral.
Mathematically, the problem is reduced to a generalized eigenvalue problem:
Lebesgue quadrature value--nodes are the eigenvalues and
the corresponding weights are the square of the averaged eigenvectors.
A numerical estimation of an integral as the Lebesgue integral
is especially advantageous when analyzing irregular and stochastic processes.
The approach separates the outcome (value--nodes) and the probability of the outcome (weight).
For this reason, it is especially well--suited for the study of non--Gaussian processes.
The
\href{http://www.ioffe.ru/LNEPS/malyshkin/code_polynomials_quadratures.zip}{software}
implementing the theory is available from the authors.
\end{abstract}
\maketitle

\section{\label{intro}Introduction}
A Gaussian quadrature is typically considered as
``an integral calculation tool''.
However,
the quadrature itself can be considered
as a discrete measure\cite{totik}.
The major practical drawback of Gauss--type quadratures
is that they,  like a Riemann integral,
are finding the nodes in a function's argument space.
A very attractive idea is to
build a quadrature with the nodes in
a function's value space, a Lebesgue--type quadrature.
As with the Lebesgue integral,
such a quadrature can be applied to 
integration of irregular functions
and interpolating sampled measure by a discrete Lebesgue integral.
When implemented numerically
such an approach can give
a completely new look toward 
relaxation type processes analysis.
This is the goal of this paper.

\section{\label{Measure} Measure}
Consider a measure $d\mu$, a basis $Q_k(x)$,
and a function to integrate $f(x)$.
An example of the measure can be:
Chebyshev with $[-1:1]$ support  $d\mu=dx/\sqrt{1-x^2}$,
 Laguerre with $[0:\infty]$ support  $d\mu=dx\exp(-x)$,
experimental data sample $(f^{(l)},x^{(l)})$ of $l=1\dots M$ points (discrete $M$--point measure),
etc.
In this paper $Q_k(x)$ basis is a polynomial of the degree $k$, e.g. $x^k$
or some orthogonal polynomials basis,
the results are invariant with respect
to basis choice,  $Q_k(x)=x^k$ and $Q_k=T_k(x)$
give \textsl{identical} results, but numerical stability
can be drastically different\cite{beckermann1996numerical,2015arXiv151005510G}.
Introduce Paul Dirac quantum mechanic
\href{https://en.wikipedia.org/wiki/Bra%E2%80%93ket_notation}{bra--ket notation}
\cite{wiki:braketnotation} $\Bra{}$ and $\Ket{}$:
\begin{eqnarray}
  \Braket{Q_k f}&=&\int d\mu Q_k(x) f(t) \\
  \Braket{Q_j | f |Q_k }&=&\int d\mu Q_j(x) Q_k(x) f(t) \label{QfQmoments}
  \label{braket}
\end{eqnarray}
The problem we study in this paper is to estimate
a Lebesgue integral\cite{kolmogorovFA}
by an optimal $n$--point discrete measure (\ref{intLebesgue}).
\begin{align}
  \Braket{f}&=\int f d\mu
  \label{LebdMuKF}
\end{align}
We are going to apply the technique
originally
developed in Refs. \cite{2015arXiv151005510G,2016arXiv161107386V,ArxivMalyshkinMuse},
the main idea  is to consider not a traditional
interpolation of an observable $f$ as a linear superposition
of basis functions such as
\begin{align}
  \Braket{\left[f(x)-f_{LS}(x)\right]^2}&\rightarrow\min\label{norm2regrf}\\
  f_{LS}(x)&= \sum_{k=0}^{n-1} \beta_k Q_k(x) \label{linsuperF}  
\end{align}
but instead to introduce a wavefunction $\psi(x)$
as a linear superposition
of basis functions, then to average an observable $f(x)$
with the $\psi^2(x)d\mu$ weight:
\begin{eqnarray}
  \psi(x)&=&\sum_{j=0}^{n-1} \alpha_j Q_j(x) \label{psixdef} \\
  f_{\psi}&=& \frac{\Braket{\psi|f|\psi}}{\Braket{\psi|\psi}} 
  = \frac{\sum\limits_{j,k=0}^{n-1} \alpha_j\Braket{Q_j|f|Q_k}\alpha_j}
  {\sum\limits_{j,k=0}^{n-1} \alpha_j\Braket{Q_j|Q_k}\alpha_k}
  \label{fpsi}
\end{eqnarray}
With a positively defined matrix $\Braket{Q_j|Q_k}$ 
the generalized eigenvalue  problem:
\begin{align}
\sum\limits_{k=0}^{n-1} \Braket{Q_j|f|Q_k} \alpha^{[i]}_k &=
  \lambda^{[i]} \sum\limits_{k=0}^{n-1} \Braket{ Q_j|Q_k} \alpha^{[i]}_k
  \label{GEV} \\ 
 \psi^{[i]}(x)&=\sum\limits_{k=0}^{n-1} \alpha^{[i]}_k Q_k(x)
  \label{psiC}
\end{align}
has a unique solution.
Found eigenfunctions to be normalized as
$\Braket{\psi^{[i]}|\psi^{[j]}}=\delta_{ij}$. Then 
$\Braket{\psi^{[i]}|f|\psi^{[j]}}=\lambda^{[i]}\delta_{ij}$ ;
$\sum_{l,m=0}^{n-1} \alpha^{[i]}_l\Braket{Q_l|Q_m}\alpha^{[j]}_m=\delta_{ij}$ ; 
and 
$\lambda^{[i]}=\Braket{\left[\psi^{[i]}\right]^2f}\Big/\Braket{\left[\psi^{[i]}\right]^2}$. 

\subsection{\label{gaussQ}The Gaussian quadrature}
A $n$-point Gaussian quadrature $(x^{[i]},w^{[i]})$; $i=0\dots n-1$:
\begin{eqnarray}
  \int f(x) d\mu=\Braket{f}&\approx&\sum_{i=0}^{n-1} f(x^{[i]})w^{[i]}
  \label{intGauss}
\end{eqnarray}
on the measure $d\mu$ is integration formula (\ref{intGauss})
that is exact if $f(x)$ is a polynomial of a degree  $2n - 1$
or less, in other cases it can be considered
as an approximation
of the measure $d\mu$ by a discrete $n$--point measure $(x^{[i]},w^{[i]})$.
A question about an efficient numerical approach
to $(x^{[i]},w^{[i]})$ calculation is a subject of extensive work\cite{totik,gautschi2004orthogonal}.
In our recent work\cite{2015arXiv151005510G} we established,
that the most practical approach to obtain $(x^{[i]},w^{[i]})$
for an arbitrary measure (often available only through data sample) 
is to put $f=x$ in Eq. (\ref{GEV}) and to solve
the generalized eigenvalue  problem:
\begin{eqnarray}
\sum\limits_{k=0}^{n-1} \Braket{Q_j|x|Q_k} \alpha^{[i]}_k &=&
  \lambda^{[i]} \sum\limits_{k=0}^{n-1} \Braket{ Q_j|Q_k} \alpha^{[i]}_k
  \label{GEVgauss} \\
  x^{[i]}&=&\lambda^{[i]} \label{xi} \\
  w^{[i]}&=&\frac{1}{\left[\psi^{[i]}(x^{[i]})\right]^2}\label{wi}
\end{eqnarray}
The $n$--th order orthogonal polynomial
relatively the measure $d\mu$
is equal to the $\pi_n(x)=const\cdot(x-x^{[i]})\psi^{[i]}(x)=const\prod_{j=0}^{n-1}(x-x^{[j]})$.
The Gaussian quadrature nodes $x^{[i]}$ are (\ref{GEVgauss}) eigenvalues,
the weights are equal to inverse square of the eigenfunction at $x=x^{[i]}$
(the eigenfunctions are normalized as
$\Braket{\psi^{[i]}|\psi^{[i]}}=\sum_{j,k=0}^{n-1} \alpha^{[i]}_j\Braket{Q_j|Q_k}\alpha^{[i]}_k=1$).
The (\ref{GEVgauss}) is \textsl{exactly}
the threediagonal
Jacobi matrix eigenvalue problem (see Ref. \cite{killip2003sum} and references therein for a review),
but
written in the basis of $Q_k(x)$, not 
in the basis of $\pi_k(x)$
as typically studied.
Particularly, this makes it easy to  obtain
three term
recurrence coefficients $a_k$ and $b_k$
($x\pi_{k}=a_{k+1}\pi_{k+1}+b_{k}\pi_{k}+a_{k}\pi_{k-1}$)
from a sampled data
numerically: find the moments $\Braket{Q_m}$ ; $m=0\dots 2n-1$
and
obtain  orthogonal polynomials $\pi_k$ ; $k=0\dots n$ in $Q_k(x)$ basis;
then  calculate $a_k$ and $b_k$ using multiplication operator of $Q_k(x)$ basis functions,
see the method \texttt{\seqsplit{getAB()}} of
\href{http://www.ioffe.ru/LNEPS/malyshkin/code_polynomials_quadratures.zip}{provided software}.
An ability to use Chebyshev or Legendre basis as $Q_k(x)$
allows us to calculate the $a_k$ and $b_k$
to a very high order (hundreds).
The weight expression (\ref{wi})
is typically more convenient numerically than the one
with
the Christoffel function $K(x)$:
\begin{eqnarray}
  K(x)&=&\frac{1}{\sum_{j,k=0}^{n-1}Q_j(x)G^{-1}_{jk}Q_k(x)}
  =\frac{1}{\sum_{i=0}^{n-1} \left[\phi^{[i]}(x)\right]^2}
  \label{christoffelfun}
\end{eqnarray}
Here $G^{-1}_{jk}$ is  Gram matrix $G_{jk}=\Braket{Q_jQ_k}$ inverse;
in (\ref{christoffelfun}) the $\phi^{[i]}(x)$ is
an arbitrary orthogonal basis, such that $\Braket{\phi^{[i]}|\phi^{[j]}}=\delta_{ij}$, when $\phi^{[i]}(x)=\psi^{[i]}(x)$ obtain (\ref{wi}).

The Gaussian quadrature (\ref{intGauss}
can be considered as a Riemann integral formula,
its nodes $x^{[i]}$ select optimal positions of a function's argument,
they are $\|x\|$ operator eigenvalues (\ref{GEVgauss}),
this integration formula assumes that $f(x^{[i]})$ exist
and can be calculated.
As with any Riemann integral
it requires the $f(x)$ to be sufficiently regular
for an integral to exist.

\subsection{\label{LebesgueQuadrature}The Lebesgue quadrature}
The \href{https://en.wikipedia.org/wiki/Riemann_integral}{Riemann integral}
sums the measure of  all $[x:x+dx]$ intervals.
The \href{https://en.wikipedia.org/wiki/Lebesgue_integration#Intuitive_interpretation}{Lebesgue integral}
sums the measure of 
all $x$ intervals for which the value of function
is in the interval $[f:f+df]$,
see demonstrating Fig. 1 of Ref. \cite{ArxivMalyshkinMuse}.
Consider a $n$-point Lebesgue quadrature
$(f^{[i]},w^{[i]})$; $i=0\dots n-1$:
\begin{eqnarray}
  \int f(x) d\mu=\Braket{f}&\approx&\sum_{i=0}^{n-1} f^{[i]} w^{[i]}
  \label{intLebesgue}
\end{eqnarray}
Now quadrature nodes $f^{[i]}$ are in function \textsl{value} space,
not in function \textsl{argument} space as in (\ref{intGauss}).
We will call them the \textbf{value--nodes}.
To obtain the value--nodes and weights
of a Lebesgue quadrature for the measure $d\mu$ and function $f$
consider an arbitrary polynomial $P(x)$ of a degree $n-1$ or less
and expand it in (\ref{GEV}) eigenfunctions:
\begin{eqnarray}
  P(x)&=&\sum\limits_{i=0}^{n-1}\Braket{P|\psi^{[i]}}\psi^{[i]}(x)
  \label{constexpansion}
\end{eqnarray}
Taking into account that 
$\Braket{P|f|\psi^{[i]}}=\lambda^{[i]}\Braket{P|\psi^{[i]}}$
the expression for
$\Braket{P|f|S}$ can be written
(here $P(x)$ and $S(x)$ are arbitrary polynomials
of a degree $n-1$ or less):
\begin{eqnarray}
  \Braket{P|f|S}&=&\sum\limits_{i=0}^{n-1} \lambda^{[i]}
  \Braket{P|\psi^{[i]}}\Braket{S|\psi^{[i]}}
  \label{inegralGsumP2} \\
  \Braket{f}&=&\sum\limits_{i=0}^{n-1}\lambda^{[i]} \Braket{\psi^{[i]}}^2 
  \label{inegralGsum}
\end{eqnarray}
The (\ref{inegralGsum}) (the case $P=S=1$)
is eigenvalues averaged
with the weights  $\Braket{\psi^{[i]}}^2$
(note that $\Braket{\left[\psi^{[i]}\right]^2}=1$).
The (\ref{inegralGsum}) gives the Lebesgue quadrature value--nodes and weights:
\begin{eqnarray}
  f^{[i]}&=&\lambda^{[i]} \label{fiLeb} \\
  w^{[i]}&=& \Braket{\psi^{[i]}}^2 \label{wiLeb}
\end{eqnarray}
The Lebesgue quadrature 
can be considered as a Lebesgue integral
interpolating formula
by a $n$--point discrete  measure  (\ref{intLebesgue}).
The value--nodes $f^{[i]}$ select optimal positions of function values,
they are $\|f\|$ operator  eigenvalues (\ref{GEV}),
the weight $w^{[i]}$ is the measure corresponding
to the value $f^{[i]}$. The
weights (\ref{wiLeb}) give
\begin{align}
  \Braket{1}&=\sum_{i=0}^{n-1}w^{[i]}
  \label{Lebweightssum}
\end{align}
the same normalizing as for the Gaussian quadrature weights (\ref{wi}).
As with the Gaussian quadrature (\ref{intGauss})
the Lebesgue quadrature (\ref{intLebesgue}
is exact for some class of functions.

\begin{theorem}
  \label{theoremP}
If a $n$--point  Lebesgue quadrature (\ref{intLebesgue})
is constructed for a measure $d\mu$ and a function $f(x)$,
then any integral $\Braket{P(x)f(x)}$,
where $P(x)$ is a polynomial of a degree $2n-2$ or less,
can be evaluated from it exactly.
\end{theorem}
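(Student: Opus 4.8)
The plan is to reduce the claim to the bilinear identity (\ref{inegralGsumP2}), which already evaluates $\Braket{P|f|S}$ exactly for any two polynomials $P,S$ of degree $n-1$ or less. That identity is itself an exact statement, not an approximation: expanding $P$ and $S$ in the $n$ eigenfunctions via (\ref{constexpansion}) and using $\Braket{P|f|\psi^{[i]}}=\lambda^{[i]}\Braket{P|\psi^{[i]}}$ turns the triple bracket into a finite sum over $i$. The starting observation I would make is that, by the definition (\ref{QfQmoments}), the triple bracket factors through the ordinary product, $\Braket{P|f|S}=\int d\mu\,P(x)S(x)f(x)=\Braket{(PS)\,f}$, so (\ref{inegralGsumP2}) is in fact an exact formula for $\Braket{(PS)\,f}$ whenever the polynomial prefactor of $f$ arises as a product of two factors each of degree $n-1$ or less.

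First I would reduce to monomials. Writing the prefactor as $P(x)=\sum_{m=0}^{2n-2} p_m x^m$, linearity gives $\Braket{P\,f}=\sum_m p_m\Braket{x^m f}$, so it suffices to evaluate each $\Braket{x^m f}$ with $0\le m\le 2n-2$. For every such $m$ the exponent splits as $m=a+b$ with $0\le a,b\le n-1$ (take $a=\min(m,n-1)$, $b=m-a$), so $x^m=x^a\,x^b$ is a product of two polynomials of degree $n-1$ or less. Applying (\ref{inegralGsumP2}) with these two factors then gives
\begin{align}
  \Braket{x^m f}=\Braket{x^a|f|x^b}=\sum_{i=0}^{n-1}\lambda^{[i]}\Braket{x^a|\psi^{[i]}}\Braket{x^b|\psi^{[i]}},
  \label{PxfSplit}
\end{align}
an exact equality. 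Summing over $m$ recovers $\Braket{P\,f}$ exactly from the eigensystem $(\lambda^{[i]},\psi^{[i]})$ of (\ref{GEV}); the projections $\Braket{x^a|\psi^{[i]}}$ needed here are by-products of the quadrature construction. Equivalently, one may phrase the argument as a spanning statement: the products $PS$ with $\deg P,\deg S\le n-1$ span precisely the polynomials of degree $2n-2$ or less, because the monomials $x^a x^b=x^{a+b}$ already realize every power $x^0,\dots,x^{2n-2}$.

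The step that needs the most care, and also the place where the bound $2n-2$ becomes sharp, is the degree bookkeeping. The eigenfunction expansion (\ref{constexpansion}) is available only for polynomials of degree $n-1$ or less, since the $n$ orthonormal eigenfunctions $\psi^{[i]}$ form a basis of the space of polynomials of degree $n-1$ or less (each $\psi^{[i]}$ is a combination of $Q_0,\dots,Q_{n-1}$ by (\ref{psiC})); this caps each factor at degree $n-1$ and hence caps the prefactor $P$ at $2(n-1)=2n-2$. I would close by recording the parallel with the Gaussian case of Section~\ref{gaussQ}: there exactness holds for a single polynomial of degree $2n-1$, whereas here the factor $f$ is unavoidable and the exact class is $P(x)f(x)$ with $\deg P\le 2n-2$.
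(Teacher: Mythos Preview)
Your argument is correct and follows the same core idea as the paper: factor the polynomial prefactor as a bilinear expression in polynomials of degree at most $n-1$ and then invoke the exact identity (\ref{inegralGsumP2}). The only cosmetic difference is the bookkeeping of the decomposition: you split each monomial as $x^m=x^a x^b$, whereas the paper writes $P(x)=\sum_{j,k}Q_j(x)M_{jk}Q_k(x)$ for a (non--unique) matrix $M_{jk}$ and then collects the result into a single quadrature--like formula $\Braket{fP}=\sum_i \lambda^{[i]} w_{(P)}^{[i]}$ with modified weights $w_{(P)}^{[i]}=\sum_{j,k}\Braket{\psi^{[i]}|Q_j}M_{jk}\Braket{Q_k|\psi^{[i]}}$. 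Your monomial split is a particular choice of such an $M_{jk}$, so the two proofs are equivalent; the paper's packaging has the small advantage of displaying the answer in the same value--node/weight format as the Lebesgue quadrature itself.
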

\begin{proof}
When $P(x)$ is of a degree $n-1$ or less, then apply (\ref{inegralGsumP2})
with $S=1$.
For a degree above $n-1$
expand $P(x)=\sum_{j,k=0}^{n-1}Q_j(x)M_{jk}Q_k(x)$.
The matrix $M_{jk}$ is non--unique,
but always exists and can be obtained
e.g. by synthetic division $P(x)=Q_{n-1}(x)q(x)+r(x)$,
or using density matrix approach of the Appendix \ref{densitymatrix}.
The integral $\Braket{fP(x)}=\sum_{j,k=0}^{n-1}\Braket{Q_j|f|Q_k}M_{jk}$
then can be evaluated using (\ref{inegralGsumP2}) formula:
\begin{align}
  \Braket{fP(x)}&=\sum_{i=0}^{n-1}\lambda^{[i]} w_{(P)}^{[i]}
=\sum_{i=0}^{n-1}\lambda^{[i]} \Braket{\psi^{[i]}|\widehat{P}|\psi^{[i]}}
  \label{fP} \\
  w_{(P)}^{[i]}&=\Braket{\psi^{[i]}|\widehat{P}|\psi^{[i]}}=
  \sum\limits_{j,k=0}^{n-1}\Braket{\psi^{[i]}|Q_j}
  M_{jk} \Braket{Q_k|\psi^{[i]}}
   \label{wP}
\end{align}
The formula (\ref{fP}) has the same
eigenvalues $\lambda^{[i]}$, but they are now averaged with the
weights $w_{(P)}^{[i]}$, that are not necessary positive as in (\ref{wiLeb}),
note that $\Braket{P(x)}=\sum_{i=0}^{n-1}w_{(P)}^{[i]}$.
\end{proof}
\begin{remark}
The Gaussian quadrature can be considered as a
special case of the Lebesgue quadrature. If one put $f=x$,
then $n$--point Lebesgue quadrature gives exact answer
for an integral $\Braket{fP(x)}$  with a polynomial $P(x)$ of a degree $2n-2$ or less,
is reduced to a quadrature that is exact
for a polynomial $xP(x)$ of a degree $2n-1$ or less,
i.e. to a Gaussian quadrature.
 When $f=x$
the Lebesgue quadrature value--nodes are equal to the Gaussian nodes.
The most remarkable feature of the Lebesgue quadrature
is that it directly estimates the distribution of $f$:
each $w^{[i]}$ from (\ref{wiLeb}) is the measure
of $f(x)\approx f^{[i]}$ sets. For an application of
this feature to the
optimal clustering problem see \cite{malyshkin2019radonnikodym}.
\end{remark}

Theorem \ref{theoremP}
gives an algorithm for $\Braket{fP(x)}$  integral calculation:
use the same value--nodes $f^{[i]}$ from (\ref{fiLeb}),
but the weights are now from (\ref{wP}).
The Lebesgue quadrature allows to obtain
the value of any $\Braket{fP(x)}$ integral,
adjusting only the weights, value--nodes remain the same,
what provides a range of opportunities in applications.

A question arises about the most convenient
way to store and apply a quadrature.
As both Gaussian and Lebesgue quadratures
are obtained from (\ref{GEV}) generalized eigenvalue 
problem, the  $n$ pairs $(\lambda^{[i]},\psi^{[i]})$
completely define the quadrature.
For the Gaussian quadrature (\ref{GEVgauss}) $f(x)=x$, the eigenvalues
are the nodes, the eigenvectors are
Lagrange interpolating polynomial
built on  $x^{[i]}$ roots
of orthogonal polynomial $\pi_n(x)$ degree $n$
relatively the measure $d\mu$:
$\psi^{[i]}(x)=const\cdot\pi_n(x)/(x-x^{[i]})$.
For (\ref{GEVgauss}) eigenvectors
$\Braket{x^n|\psi^{[i]}}=(x^{[i]})^n\Braket{\psi^{[i]}}$, the (\ref{wP})
is  then $w_{(P)}^{[i]}=P(x^{[i]})\Braket{\psi^{[i]}}^2$,
hence
it is more convenient to store a Gaussian quadrature
as $(x^{[i]},w^{[i]})$ pairs rather than as $(x^{[i]},\psi^{[i]})$ pairs.
For Lebesgue quadrature the $w_{(P)}^{[i]}$ dependence (\ref{wP}) on $P(x)$
is not that simple,
it
requires an access to eigenvectors $\psi^{[i]}$ to calculate,
for this reason it is more convenient to store a Lebesgue quadrature
as $(f^{[i]},\psi^{[i]})$ pairs
rather than as $(f^{[i]},w^{[i]})$ pairs.
The specific form of quadrature storage
is determined by application,
in any case
all the results are obtained from  defining the quadrature pairs
$(\lambda^{[i]},\psi^{[i]})$,
a unique solution of (\ref{GEV}) problem.
This uniqueness makes the basis $\psi^{[i]}(x)$ very attractive
for principal components expansion.
For example the variation (\ref{norm2regrf}) can be PCA expanded:
\begin{align}
  \Braket{\left[f(x)-f_{LS}(x)\right]^2}&
 =
  \Braket{f^2}
  -\sum\limits_{i=0}^{n-1}\left(f^{[i]}\right)^2w^{[i]}
=
  \Braket{\left(f-\overline{f}\right)^2}
  -\sum\limits_{i=0}^{n-1}\left(f^{[i]}-\overline{f}\right)^2w^{[i]} 
  \label{evexpansionStdev}
\end{align}
Here  $\overline{f}={\Braket{f}}/{\Braket{1}}$.
The difference between (\ref{evexpansionStdev})
and regular principal components is that the basis $\psi^{[i]}(x)$
of the Lebesgue quadrature is \textsl{unique}.
This removes the major limitation of a principal components method:
it's dependence
on the attributes scale.

\subsection{\label{RadonNikodim}Numerical Estimation Of Radon--Nikodym Derivative}
Radon--Nikodym derivative\cite{kolmogorovFA}
is typically considered
as a probability density $d\nu/d\mu$ relatively
two Lebesgue measures $d\nu$ and $d\mu$.
Consider $f=d\nu/d\mu$, then (\ref{GEV}) is generalized eigenvalue 
problem with $\Braket{Q_j|\frac{d\nu}{d\mu}|Q_j}$ and $\Braket{Q_j|Q_j}$
matrices (basis functions products $Q_jQ_k$ averaged with respect
to the measure
$d\nu$ and $d\mu$ respectively). If at least one of these
two matrices is positively
defined then (\ref{GEV}) has a unique solution.
\begin{theorem}
  \label{theoremRN}
  The eigenvalues $\lambda^{[i]}$
  $i=0\dots n-1$ are $d\nu/d\mu$ Radon--Nikodym derivative
  extremums in the basis of (\ref{GEV}).
\end{theorem}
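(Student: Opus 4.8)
The plan is to recognize Theorem \ref{theoremRN} as the Rayleigh--Ritz variational characterization of the generalized eigenvalue problem (\ref{GEV}). The object whose extremums we seek is the averaged Radon--Nikodym derivative $f_{\psi}$ of (\ref{fpsi}): with $f=d\nu/d\mu$ one has
\[
f_{\psi} = \frac{\Braket{\psi|\frac{d\nu}{d\mu}|\psi}}{\Braket{\psi|\psi}}
= \frac{\int \psi^2\, d\nu}{\int \psi^2\, d\mu},
\]
so $f_{\psi}$ is precisely the Radon--Nikodym derivative localized by the weight $\psi^2$. Restricting $\psi$ to the $n$--dimensional span of $Q_0,\dots,Q_{n-1}$ means regarding $f_{\psi}$ as a function of the coefficient vector $\alpha=(\alpha_0,\dots,\alpha_{n-1})$, and the phrase ``extremums in the basis of (\ref{GEV})'' refers to the stationary values of this function.

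First I would introduce the two quadratic forms $N(\alpha)=\sum_{j,k}\alpha_j\Braket{Q_j|f|Q_k}\alpha_k$ and $D(\alpha)=\sum_{j,k}\alpha_j\Braket{Q_j|Q_k}\alpha_k$, so that $f_{\psi}=N/D$, noting that $D>0$ for $\alpha\neq 0$ since the Gram matrix $\Braket{Q_j|Q_k}$ is positive definite. Then I would impose stationarity $\partial f_{\psi}/\partial\alpha_j=0$; by the quotient rule this is equivalent to $\partial N/\partial\alpha_j=(N/D)\,\partial D/\partial\alpha_j=f_{\psi}\,\partial D/\partial\alpha_j$.

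The key step is to evaluate the two gradients using the symmetry of both matrices $\Braket{Q_j|f|Q_k}$ and $\Braket{Q_j|Q_k}$, which gives
\[
\frac{\partial N}{\partial\alpha_j}=2\sum_k\Braket{Q_j|f|Q_k}\alpha_k,
\qquad
\frac{\partial D}{\partial\alpha_j}=2\sum_k\Braket{Q_j|Q_k}\alpha_k .
\]
Substituting these into the stationarity condition and cancelling the factor of $2$ yields exactly
\[
\sum_k \Braket{Q_j|f|Q_k}\alpha_k = f_{\psi}\sum_k \Braket{Q_j|Q_k}\alpha_k,
\]
which is identical to (\ref{GEV}) with $f_{\psi}$ in the role of $\lambda$. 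Hence every stationary point of $f_{\psi}$ is an eigenvector $\alpha^{[i]}$ of (\ref{GEV}), and the corresponding stationary (extremal) value of the averaged Radon--Nikodym derivative equals the eigenvalue $\lambda^{[i]}$, which is the assertion.

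The main obstacle I anticipate is not the computation but the bookkeeping of well--definedness: one must ensure the denominator never vanishes on the search space so that $f_{\psi}$ is genuinely smooth and all its stationary points are captured by the gradient equation. This is guaranteed here by positive definiteness of $\Braket{Q_j|Q_k}$, exactly the hypothesis recorded before the theorem; if instead only the $\nu$--Gram matrix $\Braket{Q_j|f|Q_k}$ were positive definite, I would run the identical argument on the reciprocal quotient $D/N$, whose stationary values are $1/\lambda^{[i]}$.
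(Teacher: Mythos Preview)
Your proposal is correct and is essentially the same argument as the paper's: both identify the extremal values of the Rayleigh quotient $\Braket{\psi|f|\psi}/\Braket{\psi|\psi}$ with the eigenvalues of (\ref{GEV}) by setting the first variation to zero. The only cosmetic difference is that the paper perturbs $\psi\mapsto\psi+\delta\psi$ and reads off the linear term, whereas you differentiate with respect to the coordinates $\alpha_j$; your version in fact states the equivalence (stationary $\Leftrightarrow$ eigenvector) a bit more explicitly, and your closing remark about swapping to $D/N$ when only the $\nu$--Gram matrix is positive definite is a nice addition not spelled out in the paper.
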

  \begin{proof}
    Consider the first variation of 
    $\frac{\Braket{\psi|\frac{d\nu}{d\mu}|\psi}}{\Braket{\psi|\psi}}$
    in the state $\widetilde{\psi}(x)=\psi(x)+\delta\psi$, then
    \begin{eqnarray}
      \frac{\Braket{\psi+\delta\psi|\frac{d\nu}{d\mu}|\psi+\delta\psi}}{\Braket{\psi+\delta\psi|\psi+\delta\psi}}&=&
      \Braket{\psi|\frac{d\nu}{d\mu}|\psi} \nonumber \\
      &+&2\left[\Braket{\psi|\frac{d\nu}{d\mu}|\delta\psi}
      -\Braket{\psi|\frac{d\nu}{d\mu}|\psi}
      \Braket{\psi|\delta\psi}\right]+\dots
      \label{fvarpsi}
    \end{eqnarray}
    when $\Ket{\psi}$ is (\ref{GEV}) eigenvector,
    then the first variation (\ref{fvarpsi})
    (linear in $\delta\psi$) is zero
    because of $\Ket{\frac{d\nu}{d\mu}\Big|\psi}=\lambda\Ket{\psi}$
     relation for (\ref{GEV}) eigenvectors.
  \end{proof}
  \begin{remark}
    If $\delta\psi$ does not belong to the original basis space
    of (\ref{GEV})  problem ---
    then extremal property no longer holds.
  \end{remark}
  Other estimates of Radon--Nikodym
  derivative  can be easily expressed
  in terms of (\ref{GEV}) eigenvectors.
  For example Nevai operator \cite{nevai} is equal to
  eigenvalues $\lambda^{[i]}$
  averaged with the $\left[\psi^{[i]}(x)\right]^2$ weights:
  \begin{align}
    \frac{d\nu}{d\mu}(x)&=
    \frac{\sum\limits_{i=0}^{n-1}\lambda^{[i]}\left[\psi^{[i]}(x)\right]^2}
         {\sum\limits_{i=0}^{n-1}\left[\psi^{[i]}(x)\right]^2}
         \label{bnevai}
  \end{align}
  Other estimates,
   such as the ratio of
   two Christoffel functions\cite{BarrySimon}
   for the measures $d\nu$ and $d\mu$ if both are positive,
  can also be expressed in a form of $\lambda^{[i]}$ averaged,
  but with the other weights:
   \begin{align}
    \frac{d\nu}{d\mu}(x)&=
    \frac{\sum\limits_{i=0}^{n-1}\left(\lambda^{[i]}\right)^{\gamma}\left[\psi^{[i]}(x)\right]^2}
         {\sum\limits_{i=0}^{n-1}\left(\lambda^{[i]}\right)^{\gamma-1}\left[\psi^{[i]}(x)\right]^2}
         & &-1\le\gamma\le 1
         \label{bKd}
  \end{align}  
  Different estimators converge to each other for $n\to\infty$.
A weighted $\lambda^{[i]}$ type of expression
preserves the bounds:
  if original $f$ is $[f_L:f_H]$ bounded
then (\ref{bnevai}) is $[f_L:f_H]$ bounded as well;
this is an important difference from positive polynomials
interpolation\cite{bernard2009moments}, where only
a low bound (zero) is preserved.
  A distinguishing feature
  of Radon--Nikodym
  derivative estimate as (\ref{GEV}) spectrum is that
  it is not linked to the states localized in $x$--space (such as (\ref{bnevai})),
  but  instead
  is linked to extremal states of the Radon--Nikodym derivative $d\nu/d\mu$.
  
The $\psi^{[i]}(x)$ in (\ref{bnevai})
  is $\psi^{[i]}(x)=\sum_{k=0}^{n-1}\alpha^{[i]}_kQ_k(x)$, i.e. it can be considered
  as a distribution with a \textsl{single} support point $x$: the distribution moments are equal to $Q_k(x)$.
  Now assume $Q_k(x)$
  correspond to some \textsl{actual distribution} of $x$ and $q_k$ are
  the moments of this distribution. Then the $\frac{d\nu}{d\mu}(x)$ 
   is:
   \begin{eqnarray}
    \frac{d\nu}{d\mu}(x)&=&
    \frac{\sum\limits_{i=0}^{n-1}\lambda^{[i]}\left[\sum\limits_{k=0}^{n-1}\alpha_k^{[i]}q_k\right]^2}
         {\sum\limits_{i=0}^{n-1}\left[\sum\limits_{k=0}^{n-1}\alpha_k^{[i]}q_k\right]^2}
         \label{bqdist}
   \end{eqnarray}
   The (\ref{bqdist}) is averaged eigenvalues $\lambda^{[i]}$ with positive weights,
for $q_k=Q_k(x)$
 it coincides with $x$--localized (\ref{bnevai}).
 However the (\ref{bqdist}) is much more general, it allows to obtain
 a Radon--Nikodym derivative
   for non--localized states.
   The (\ref{bqdist}) is the value of the Radon--Nikodym derivative
   for a distribution with given $q_k$ moments.
   Such ``distributed'' states naturally arise, for example,
   in a distribution regression problem\cite{2015arXiv151107085G,2015arXiv151109058G},
   where a bag of $x$--observations is mapped to a single $f$--observation.   
   There is one more generalization, considered in\cite{malyshkin2015norm,ArxivMalyshkinMuse}:
   density matrix mixed states, that cannot be reduced to a  pure state of a $\psi(x)$ form,
   we are going to discuss this generalization elsewhere,
   for a few simple examples
   see Appendix \ref{densitymatrix}, where a density matrix corresponding
   to a given polynomial is constructed and Appendix \ref{christoffelSpectrum},
  where a density matrix corresponding to the Chrisoffel function (\ref{christoffelfun})
  is constructed.
  Our approach 
  can estimate both: the measure (as a Lebesgue quadrature)
 and two measures density (as a Radon--Nikodym
 derivative), together with
\href{http://www.ioffe.ru/LNEPS/malyshkin/code_polynomials_quadratures.zip}{provided}
numerical implementation,
  this makes the approach extremely attractive to a number of practical problems,
  for example to joint probability estimation\cite{ArxivMalyshkinJointDistribution}.
  
\section{\label{numerical}Numerical Estimation}
The $(\lambda^{[i]},\psi^{[i]})$ pairs of (\ref{GEV}) eigenproblem
(for a Gaussian quadrature with $\Braket{Q_j | x |Q_k }$ and $\Braket{Q_j |Q_k }$ matrices,
and for a Lebesgue one with $\Braket{Q_j | f |Q_k }$ and $\Braket{Q_j |Q_k }$ matrices)
are required to calculate a quadrature.
A question arise about numerically most stable and efficient way of doing
the calculations.
Any $\Braket{Q_j | f |Q_k }$ matrix ($j,k=0\dots n-1$)
can  be calculated from the $\Braket{Q_m f}$ moments ($m=0\dots 2n-2$)
 using multiplication operator:
\begin{eqnarray}
  Q_j Q_k&=&\sum_{m=0}^{j+k}c_m^{jk}Q_m \label{cmul}
\end{eqnarray}
The value of $c_m^{jk}$ is analytically known
(see numerical implementation in the Appendix A of Ref. \cite{2015arXiv151005510G})
for four numerically  stable $Q_k(x)$ bases:
Chebyshev, Legendre, Hermite, Laguerre,
and for a basis with given three term
recurrence coefficients $a_k$ and $b_k$
it can be calculated numerically\footnote{
See the class \texttt{\seqsplit{com/polytechnik/utils/RecurrenceAB.java}}
of
\href{http://www.ioffe.ru/LNEPS/malyshkin/code_polynomials_quadratures.zip}{provided software}.
}
(all the bases give mathematically
identical results, because
(\ref{GEV}) is invariant with respect to an arbitrary
non--degenerated linear transform of the basis,
but numerical stability of the calculations depends greatly on basis choice).

Once the matrices $\Braket{Q_j | f |Q_k }$ and $\Braket{Q_j |Q_k }$
are calculated the (\ref{GEV}) can be solved using e.g. 
\href{http://www.netlib.org/lapack/lug/node54.html}{generalized eigenvalue problem}
subroutines from Lapack\cite{lapack}. With a good basis choice
numerically stable results can be obtained
for a 2D problem\cite{2015arXiv151101887G}
with up to $100\times 100$ elements in basis, i.e. for $10,000$ basis functions.

In Appendix A \& B of Ref. \cite{2015arXiv151005510G}
the description of
API and \verb+java+ implementation of polynomial operations in 
Chebyshev, Legendre, HermiteE, Laguerre, Shifted Legendre, Monomials  bases
is presented. The code is available from\cite{polynomialcode},
file \href{http://www.ioffe.ru/LNEPS/malyshkin/code_polynomials_quadratures.zip}{\texttt{\seqsplit{code\_polynomials\_quadratures.zip}}}.
 See 
 the program \texttt{\seqsplit{com/polytechnik/algorithms/ExampleRadonNikodym\_F\_and\_DF.java}}
for usage example.
This program reads $(x^{(l)},f^{(l)})$ pairs from a tab--separated file,
then calculates (\ref{fiLeb}) value--nodes and (\ref{wiLeb}) weights
for Lebesgue integral of the functions: $f(x)$, $df/dx$ with the measure $d\mu=dx$,
and $\frac{1}{f}df/dx$ with the measure $d\mu=fdx$,
see Ref. \cite{2016arXiv161107386V} for a description,
and Appendix \ref{RNFDFusage} for an example.
As a proof--of--concept a simple
matlab/\href{https://www.gnu.org/software/octave/}{octave}
implementation \texttt{\seqsplit{com/polytechnik/utils/LebesgueQuadratureWithEVData.m}}
is also provided,
the class calculates
the Lebesgue quadrature value--nodes and weights $(f^{[i]},w^{[i]})$
either from two matrices, or, second option, given $f(x)$
in an analytic form, calculates two matrices first
and then finds
the Lebesgue quadrature.
Usage demonstration in available from
\texttt{\seqsplit{com/polytechnik/utils/LebesgueQuadratures\_selftest.m}}.
This unoptimized
code calculates $\Braket{Q_j | f |Q_k }$ and $\Braket{Q_j |Q_k }$ matrices
in monomials and Chebyshev bases, then builds Gaussian and Lebesgue quadratures.

\section{\label{conclusion}Conclusion}
Obtained Lebesgue quadrature
is a new class of quadratures, besides being
suitable for
$\Braket{fP(x)}$ integrals estimation,
it can be applied to an estimation of the distribution of $f$:
each $w^{[i]}$ from (\ref{wiLeb}) is the measure
of $f(x)\approx f^{[i]}$ sets.
This is especially important for $f(x)$ of relaxation type,
this approach is superior to typically used approaches
based on 
$\Braket{f}$, $\Braket{f^2}$, $\Braket{f^3}$, $\Braket{f^3}$,
skewness and kurtosis approaches\cite{malyshkin2018spikes}.
In our early works\cite{2016arXiv161107386V,liionizversiyaran}
the (\ref{GEV}) equation was obtained, but
all the eigenvalues were considered to have equal weights,
their distribution
 was interpreted as a one related to
the distribution of $f(x)$,
this is similar to an interpretation of eigenvalues distribution
used in random matrix theory\cite{guhr1998random}.

In this paper an important step forward is made.
An eigenvalue $\lambda^{[i]}$ should have
the Lebesgue quadratures weight (\ref{wiLeb})
$\Braket{\psi^{[i]}}^2$, not the same weight as in our previous works
(first time the Eq. (\ref{wiLeb}) was obtained in Ref. \cite{malyshkin2015norm}
as cluster coverage,  formula (20) for $C^{[i]}$,
but it's importance was not then understood).

\begin{figure}[t]
  \includegraphics[width=5cm]{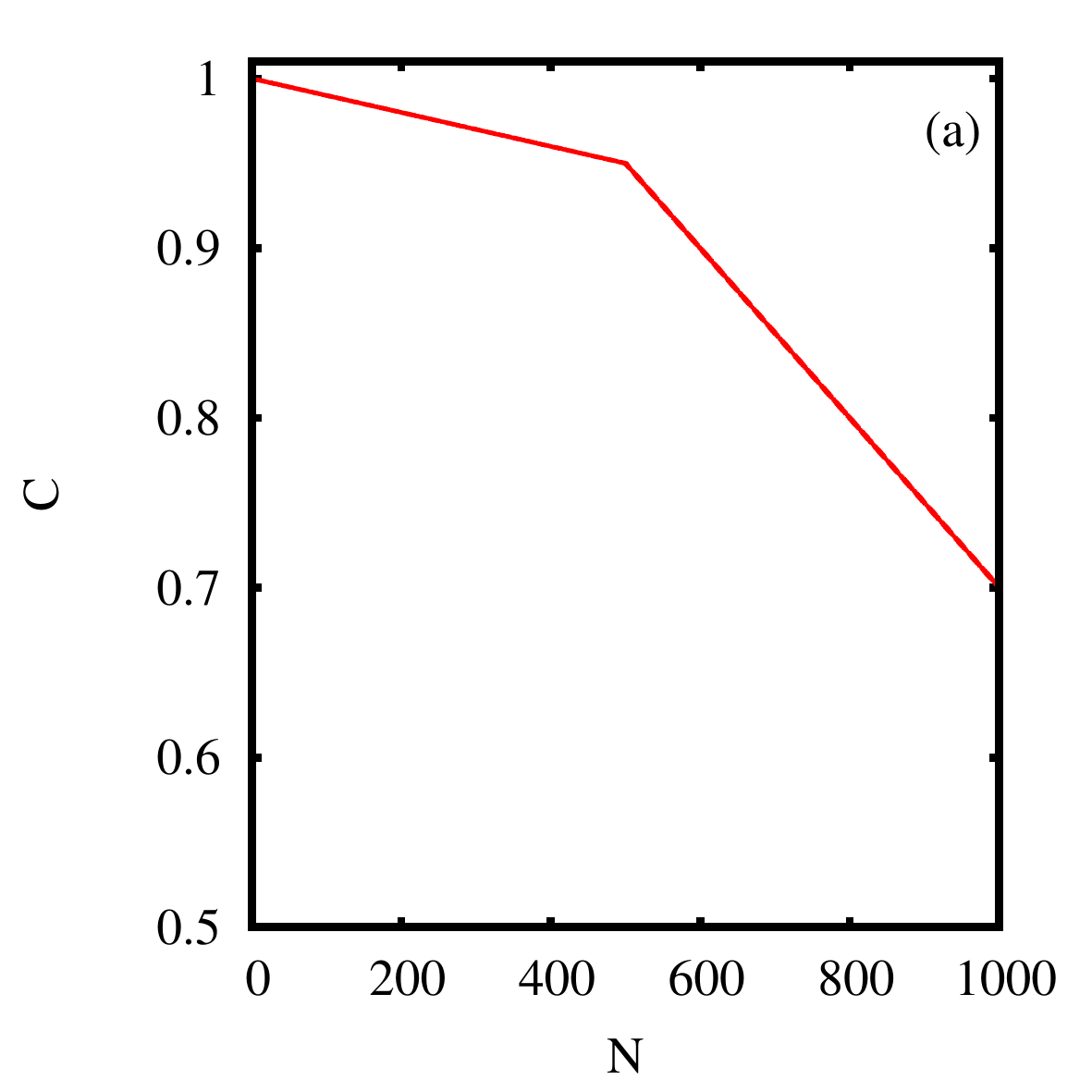}
\includegraphics[width=5cm]{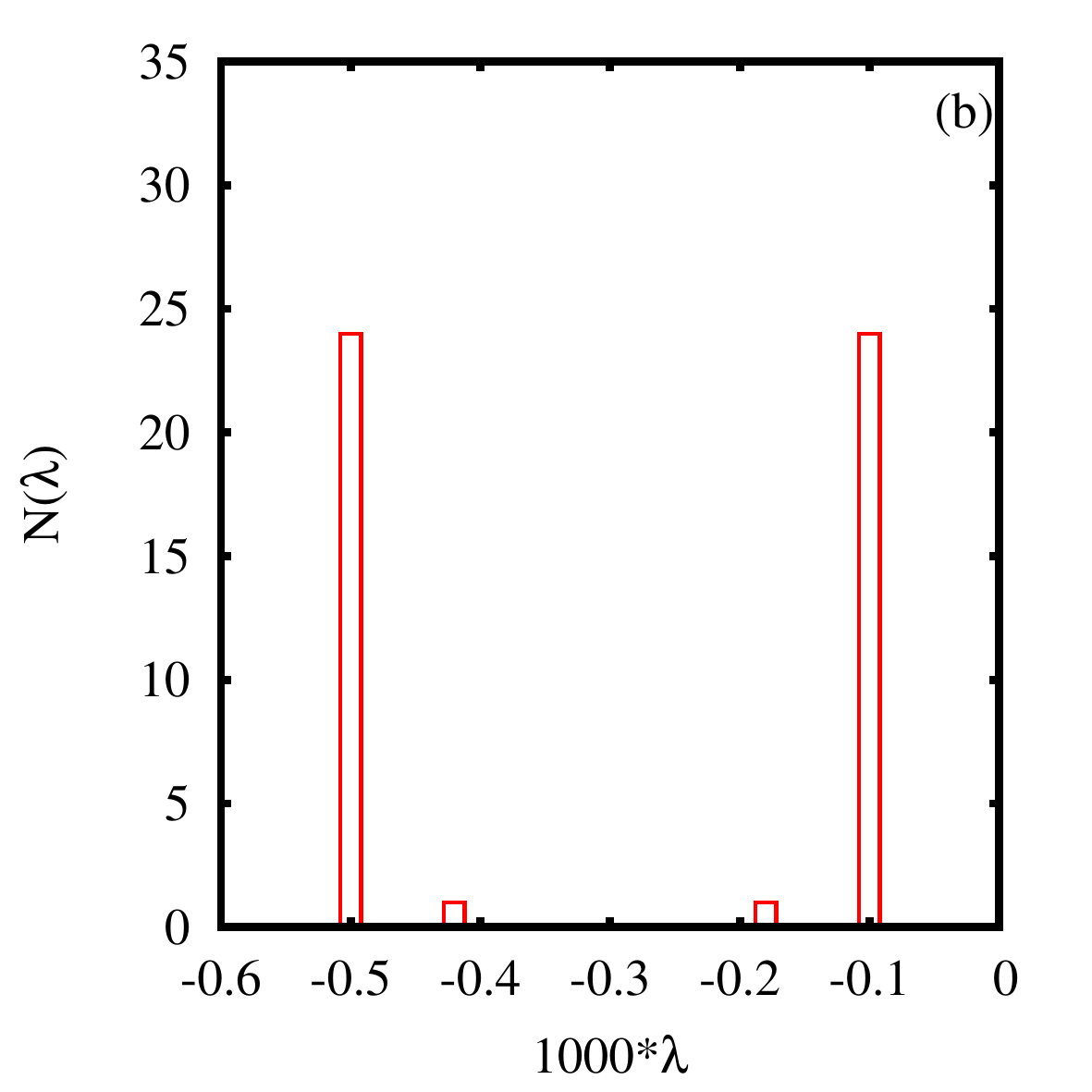}
\includegraphics[width=5cm]{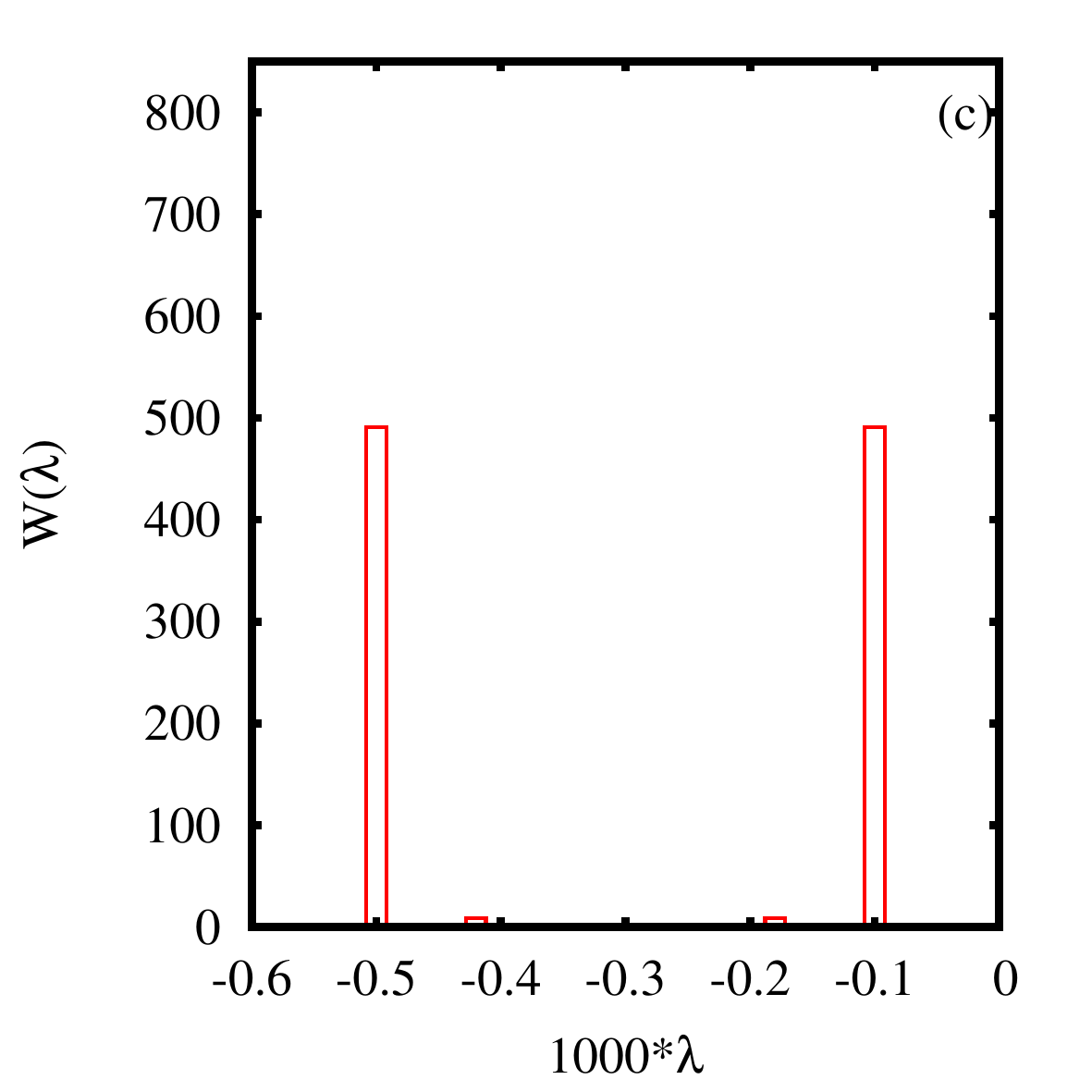} \\
\includegraphics[width=5cm]{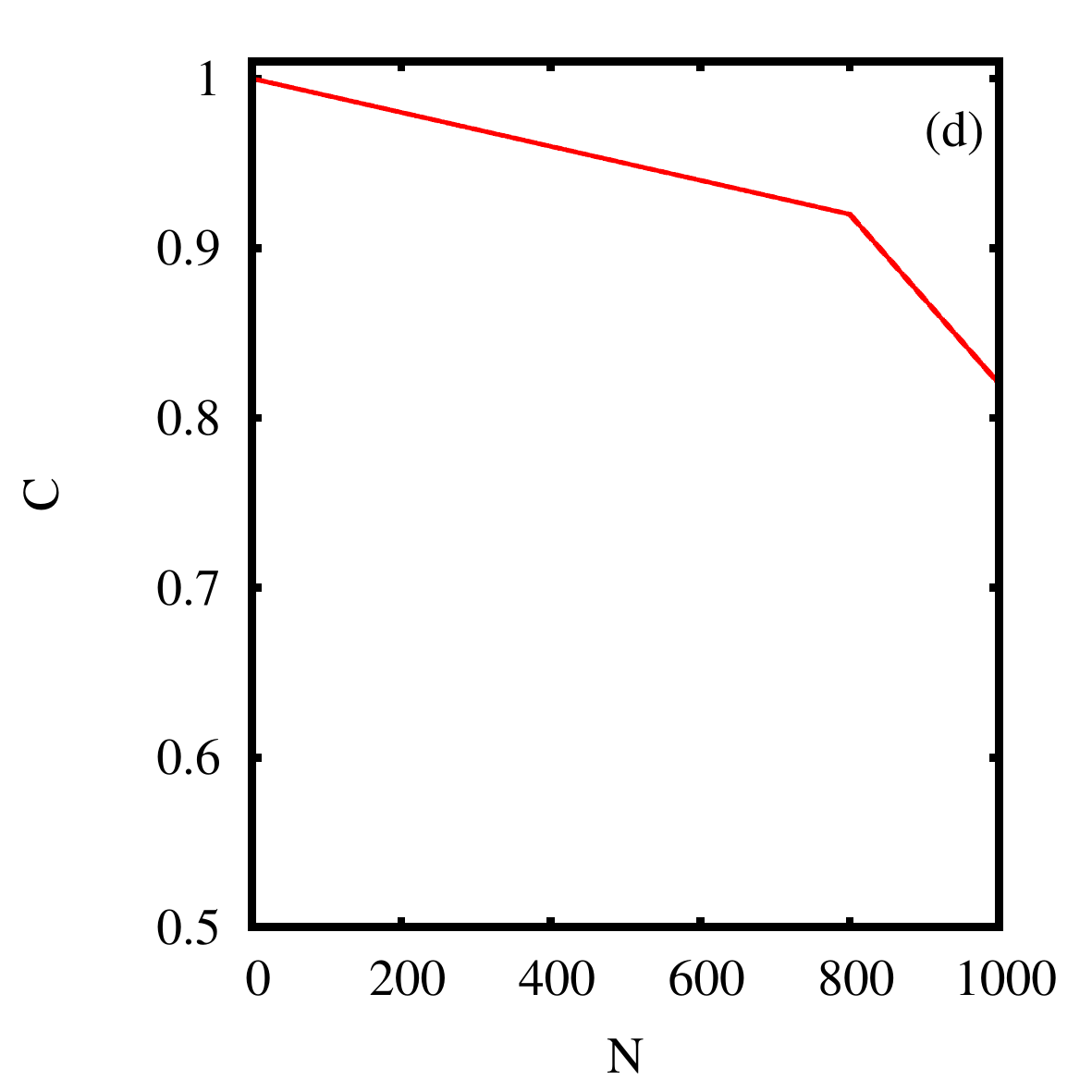}
\includegraphics[width=5cm]{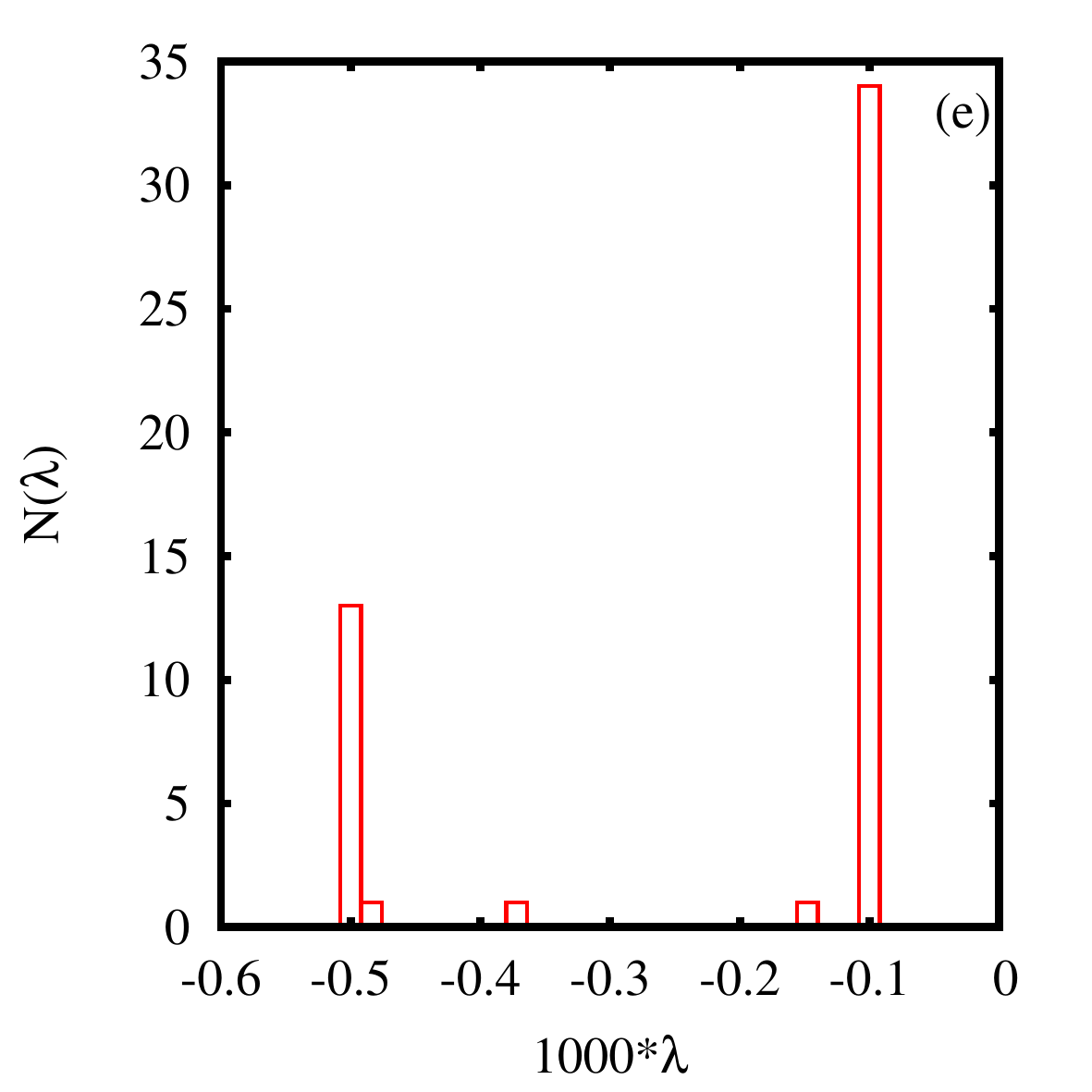}
\includegraphics[width=5cm]{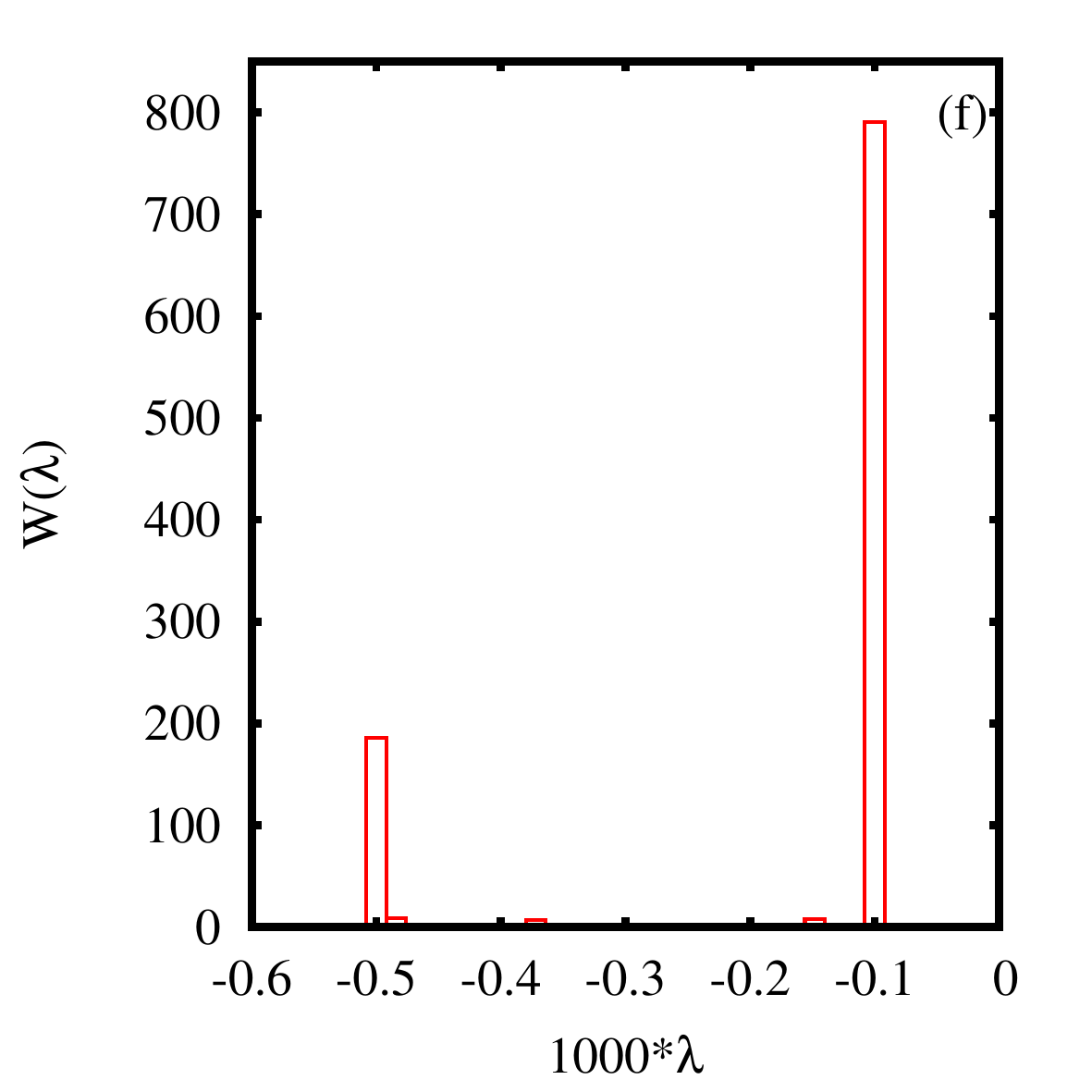}

  \caption{\label{figLiIon}
  Two stage degradation model with
  the slope on first and second stages
  $-10^{-4}$ and $-5\cdot 10^{-4}$ per cycle respectively.
  The stages length is 500:500 for (a), (b), (c)
  and 800:200 for (d), (e), (f).
  The (a) and (d) are $C(N)$ models for which
  $f=dC/dN$ is put to (\ref{GEV}).
  The (b) and (e) are the distributions of $\lambda^{[i]}$ from (\ref{GEV}) with equal weights,
  Ref. \cite{liionizversiyaran} results.
  The (c) and (f) are the distributions of $\lambda^{[i]}$
  with (\ref{wiLeb}) weights, the
   peak height corresponds exactly to the stage length because of chosen measure $d\mu=dN$.
   The calculations are performed for  $n=50$ in polynomial basis. 
  }
\end{figure}

To demonstrate the difference
in weights accounting
 take two--stage degradation
data model from Ref. \cite{liionizversiyaran}.
Li--ion batteries capacity fade with each cycle,
the degradation rate per cycle $dC/dN$
is the characteristics of interest.
Consider $x=N$ and the measure $d\mu=dN$ (recent and old cycles are equally important),
use $f(x)$ as battery degradation rate $f=dC/dN$.
As in Ref. \cite{liionizversiyaran} consider $C(N)$
for 1000 cycles, the degradation rate
for the first and second stages
is $10^{-4}$ and $5\cdot 10^{-4}$ per cycle respectively.
Two processes with first:second stages ratio
as 500:500 ($f=-10^{-4}$ for $0\leq x\leq 500$ ;
$f=-5\cdot 10^{-4}$ for $500\leq x\leq 1000$) and
800:200 ($f=-10^{-4}$ for $0\leq x\leq 800$ ;
$f=-5\cdot 10^{-4}$ for $800\leq x\leq 1000$)
are used as the model data, Fig. \ref{figLiIon}.
In our previous works\cite{2016arXiv161107386V,liionizversiyaran}
we established, that the distribution of $\lambda^{[i]}$
from (\ref{GEV})
is related to the distribution of $f$. In this paper
this relation is found, the weights are (\ref{wiLeb})
Lebesgue quadrature weights. Note, that for the data in Fig. \ref{figLiIon},
the peaks height for (c) and (f) 
correspond exactly to stage length, because
of the measure chosen $d\mu=dN$.

A Lebesgue quadrature $(f^{[i]},w^{[i]})$ 
can be interpreted as $f(x)$ discrete distribution.
The selection of value--nodes is optimal,
such a quadrature performs optimal $n$--point discretization of $f(x)$.
The approach is applicable to non--Gaussian distributions
(e.g. with infinite standard deviation (but \textsl{not} with infinite mean),
burst of many orders of magnitude, etc.).
The situation is similar to the one in quantum mechanics:
when a quantum Hamiltonian is known incorrectly
and have some energy state, that is greatly different
from the ground state,
such a state does not change system behavior at all,
because it has close to zero probability.
The Lebesgue quadrature has similar ideology, it
separates the state on: an observable value $f^{[i]}$
and the probability of it $w^{[i]}$.
Similar path have been successfully tried earlier in our
quantum--mechanics approach to machine learning of 
Ref. \cite{malyshkin2015norm},
where we separated
system properties (described by the outcomes)
and system testing conditions (described by the coverage).

\section*{Acknowledgment}
  Vladislav Malyshkin would like to thank
  \href{https://www.researchgate.net/profile/S_Bozhokin}{S. V. Bozhokin}
  and \href{https://iamm.spbstu.ru/person/komarchev_ivan_anatolevich/}{I. A. Komarchev}
  from \href{http://www.spbstu.ru/}{Peter the Great St.Petersburg Polytechnic University}
  for fruitful discussions.

\appendix
\section{\label{densitymatrix}Density matrix,
corresponding to a given polynomial}
In Section \ref{LebesgueQuadrature}
the integral $\Braket{P(x)f(x)}$ with a polynomial $P(x)$ of a degree
$2n-2$ or less is considered.
The technique of \cite{2015arXiv151005510G} deals mostly
with $\Braket{\psi^2(x)f(x)}=\Braket{\psi|f|\psi}$ type of integrals,
and it is of practical value to be able to reduce a state described
by an arbitrary polynomial:
\begin{align}
  P(x)&=\sum_{k=0}^{2n-2}\gamma_kQ_k(x)
\end{align}
to the state described by the density matrix:
\begin{align}
  \rho(x,y)&=\sum_{i=0}^{n-1}\lambda^{[i]} \psi^{[i]}(x)  \psi^{[i]}(y)  \label{rho}\\
  P(x)&= \rho(x,x) \label{rhoPrelation}
\end{align}  
such that $P(x)=\rho(x,x)$, and $\lambda^{[i]} ; \psi^{[i]}(x)$
are the eigenvalues and the eigenvectors of some operator $\|\rho\|$.
\begin{theorem}
  \label{theoremPoper}
For a non--degenerated basis $Q_k(x)$ relatively the measure $d\mu$
such operator always exists and is generated by a measure
with the moments $\Braket{Q_k(x)}_P$.
\end{theorem}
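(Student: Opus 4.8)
The plan is to read the requested statement as the solvability of the single polynomial identity $\rho(x,x)=P(x)$ for an unknown linear functional, and to exploit that the entire construction $\ell\mapsto\rho\mapsto\rho(x,x)$ is \emph{linear}. Prescribing the moments $\Braket{Q_m}_P$ for $m=0,\dots,2n-2$ is the same as prescribing a linear functional $\ell$ on polynomials of degree $2n-2$ or less through $\ell(Q_m)=\Braket{Q_m}_P$. Given such an $\ell$ I would form the symmetric matrix $F_{jk}=\ell(Q_jQ_k)$, which is computable from the moments via the multiplication coefficients (\ref{cmul}), solve the generalized eigenvalue problem (\ref{GEV}) with $F$ in place of $\Braket{Q_j|f|Q_k}$ and with the Gram matrix $G_{jk}=\Braket{Q_jQ_k}$, and assemble $\rho$ as in (\ref{rho}). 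Collecting the eigenvectors as columns of a matrix $A$, the normalization and eigen-relations read $A^{T}GA=I$ and $A^{T}FA=\Lambda$, whence the coefficient matrix of (\ref{rho}) is $\rho=G^{-1}FG^{-1}$ and its diagonal becomes
\[
  \rho(x,x)=\ell_y\!\left(\mathcal{K}(x,y)^2\right),\qquad
  \mathcal{K}(x,y)=\sum_{j,k=0}^{n-1}Q_j(x)G^{-1}_{jk}Q_k(y),
\]
where $\mathcal{K}$ is the reproducing kernel of degree-$(n-1)$ polynomials for $d\mu$, whose diagonal is the reciprocal Christoffel function of (\ref{christoffelfun}). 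Establishing (\ref{rhoPrelation}) therefore reduces to inverting the linear map $\Phi:\ell\mapsto\rho(x,x)$.

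Both the space of functionals $\ell$ and the target space of polynomials of degree $2n-2$ or less have dimension $2n-1$, so it is enough to prove $\Phi$ injective. This is the step I expect to be the main obstacle: the symmetric products $\phi^{[i]}\phi^{[j]}$ that appear are linearly dependent (there are $n(n+1)/2$ of them in a $(2n-1)$-dimensional space), so the vanishing of $\Phi(\ell)$ does not on its face force its coefficient matrix to vanish. The device I would use is to pass to an orthonormal basis $\phi^{[i]}$ for $d\mu$, so that $\mathcal{K}(x,y)=\sum_i\phi^{[i]}(x)\phi^{[i]}(y)$ and $\Phi(\ell)(x)=\sum_{i,j}L_{ij}\phi^{[i]}(x)\phi^{[j]}(x)$ with $L_{ij}=\ell(\phi^{[i]}\phi^{[j]})$. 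If $\Phi(\ell)=0$, then applying the functional $\ell$ to the identity $\sum_{i,j}L_{ij}\phi^{[i]}\phi^{[j]}\equiv 0$ and using $L_{ij}=\ell(\phi^{[i]}\phi^{[j]})$ yields $\sum_{i,j}L_{ij}^2=\ell\big(\sum_{i,j}L_{ij}\phi^{[i]}\phi^{[j]}\big)=0$, hence $L=0$; since the products $\phi^{[i]}\phi^{[j]}$ span all polynomials of degree $2n-2$ or less, $L=0$ forces $\ell=0$.

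Injectivity together with the matching dimensions makes $\Phi$ a bijection, so each $P$ of degree $2n-2$ or less equals $\Phi(\ell)$ for a unique $\ell$; its moments are the asserted $\Braket{Q_m}_P:=\ell(Q_m)$, and running the construction forward on these moments produces the operator $\|\rho\|$ of (\ref{rho}) with eigenpairs $(\lambda^{[i]},\psi^{[i]})$ from (\ref{GEV}) and with $\rho(x,x)=P(x)$, which is exactly (\ref{rhoPrelation}). I would note that positive definiteness of $G$, i.e. non-degeneracy of the basis relative to $d\mu$, is precisely what supplies $G^{-1}$, the kernel $\mathcal{K}$, and the orthonormal system $\phi^{[i]}$, so the hypothesis is consumed exactly where it is needed; and that $\ell$ need not be a positive functional, consistent with the earlier observation that the weights $w^{[i]}_{(P)}$ in (\ref{wP}) can be negative.
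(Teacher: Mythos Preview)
Your argument is correct and, at its core, is the same construction the paper uses: both identify the coefficient matrix of $\rho$ as $G^{-1}FG^{-1}$ with $F_{st}=\ell(Q_sQ_t)$ built from the unknown moments $\Braket{Q_m}_P$, and both reduce (\ref{rhoPrelation}) to a linear system of size $2n-1$ for those moments. The paper writes this system explicitly as (\ref{linsystemP}) via the multiplication coefficients $c_m^{jk}$ and then simply asserts it can be solved, after which it forms $\Braket{Q_jQ_k}_P$ and runs the generalized eigenproblem (\ref{rhoGEV}) to produce $\|\rho\|$.

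Where you genuinely add something is the invertibility step the paper leaves implicit. Your injectivity device---passing to an orthonormal basis $\phi^{[i]}$, writing $\Phi(\ell)=\sum_{i,j}L_{ij}\phi^{[i]}\phi^{[j]}$ with $L_{ij}=\ell(\phi^{[i]}\phi^{[j]})$, and then applying $\ell$ to the vanishing polynomial to force $\sum_{i,j}L_{ij}^2=0$---is a clean way to show the $(2n-1)\times(2n-1)$ system (\ref{linsystemP}) is nonsingular, and it simultaneously yields the uniqueness the paper states only in the subsequent Remark. The paper's presentation buys an explicit algorithm (it is literally the code path \texttt{getMomentsOfMeasureProducingPolynomialInKK\_MQQM()}); your presentation buys a rigorous existence/uniqueness proof and makes transparent why non-degeneracy of $G$ is exactly the hypothesis needed.
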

\begin{proof}
  To find a measure, such that
  $P(x)=\sum_{j,s,t,k=0}^{n-1}Q_j(x)\left[G^{-1}_{js}\Braket{Q_sQ_t}_P G^{-1}_{tk}\right] Q_k(x)$
  (here $G^{-1}_{jk}$ is  Gram matrix $G_{jk}=\Braket{Q_jQ_k}$ inverse)
  apply
  multiplication operator $c_m^{jk}$ from (\ref{cmul})
  to obtain:
  \begin{align}
    \sum\limits_{m=0}^{2n-2}\gamma_mQ_m(x)&=
    \sum\limits_{j,s,t,k=0}^{n-1}\,\sum\limits_{m=0}^{j+k}\,\sum\limits_{l=0}^{s+t} c_m^{jk} G^{-1}_{js} c_l^{st} G^{-1}_{tk} \Braket{Q_l}_P Q_m(x)
  \end{align}
  Comparing the coefficients by $Q_m(x)$ obtain a
  linear system of $2n-1$ dimension,
  from which the $\Braket{Q_l}_P$ ; $l=0\dots 2n-2$ moments can be found:
  \begin{align}
    \sum\limits_{j,s,t,k=0}^{n-1}\,\sum\limits_{l=0}^{s+t} c_m^{jk} G^{-1}_{js} c_l^{st} G^{-1}_{tk} \Braket{Q_l}_P  &= \gamma_m
    \label{linsystemP}
  \end{align}
  Then construct $\Braket{Q_jQ_k}_P$ Gram matrix of the measure
  corresponding to found moments $\Braket{Q_l}_P$,
  this gives the required $P(x)=\sum_{j,s,t,k=0}^{n-1}Q_j(x)G^{-1}_{js} \Braket{Q_sQ_t}_P G^{-1}_{tk} Q_k(x)$.
  To construct $\|\rho\|$ operator, eigenvalues/eigenvectors of which give (\ref{rhoPrelation}):
  solve (\ref{GEV}) generalized eigenvalue problem with
  the matrices $\Braket{Q_jQ_k}_P$
  and  $\Braket{Q_jQ_k}$
  in (\ref{GEV}) left-- and right-- hand side respectively, 
  obtained eigenvalues/eigenvectors
  pairs give (\ref{rhoPrelation}) expansion over the states of $\|\rho\|$ operator:
  \begin{align}
    \sum\limits_{k=0}^{n-1}
    \Braket{Q_jQ_k}_P
    \alpha^{[i]}_k
    &=
    \lambda^{[i]}
    \sum\limits_{k=0}^{n-1}
    \Braket{Q_jQ_k}
    \alpha^{[i]}_k
    \label{rhoGEV} \\
    \rho(x,y)&=
    \sum\limits_{i=0}^{n-1} \lambda^{[i]} \psi^{[i]}(x)\psi^{[i]}(y) =
    \sum\limits_{i=0}^{n-1} \Ket{\psi^{[i]}} \lambda^{[i]} \Bra{\psi^{[i]}} =\|\rho\|
    \label{rhoPol} \\
    P(x)&=\rho(x,x)
  \end{align}
\end{proof}
\begin{remark}
  The expansion of $P(x)=\sum_{j,s,t,k=0}^{n-1}Q_j(x)G^{-1}_{js}\Braket{Q_sQ_t}_P G^{-1}_{tk} Q_k(x)$
  with the matrix $\Braket{Q_jQ_k}_P$ generated by a measure is unique, the measure moments are
  (\ref{linsystemP}) linear system solution;
   without a requirement that the matrix to be generated by a measure,
   the solution is non--unique.
   Another non--uniqueness can arise from a degeneracy of  $\Braket{Q_jQ_k}_P$
   matrix, for example, take Christoffel function (\ref{christoffelfun}),
   $1/K(x)=P(x)=\sum_{j,k=0}^{n-1}Q_j(x)G^{-1}_{jk}Q_k(x)$:
   the solution 
   (\ref{linsystemP}) and the matrix $\Braket{Q_jQ_k}_P$ are unique,
   but the (\ref{rhoPrelation}) expansion is non--unique due to (\ref{rhoGEV})
   spectrum degeneracy
   (all the eigenvalues are equal to one),
   $1/K(x)=\sum_{i=0}^{n-1}\left[\phi^{[i]}(x)\right]^2$
   for an arbitrary orthogonal basis $\Ket{\phi^{[i]}}$.
\end{remark}
\begin{note}
  This prof is actually an algorithm to construct the density matrix $\|\rho\|$,
  producing a given polynomial $P(x)$. In provided implementation
  \texttt{\seqsplit{com/polytechnik/utils/BasisFunctionsMultipliable.java}}
  the method \texttt{\seqsplit{getMomentsOfMeasureProducingPolynomialInKK\_MQQM()}}, for a given $P(x)$,
  solves the linear system (\ref{linsystemP}) and
  obtains the moments $\Braket{Q_m}_P$. The method
  \texttt{\seqsplit{getDensityMatrixProducingGivenPolynomial()}}
  uses these moments to solve (\ref{rhoGEV})
  and
  to obtain the $\|\rho\|$ from (\ref{rhoPol})
  as a Lebesgue quadrature,
  the spectrum of which corresponds to a given polynomial $P(x)$ (\ref{rhoPrelation}).
\end{note}
From (\ref{rhoPrelation}) it immediately follows that
the sum of all $\|\rho\|$ eigenvectors
is equal to  $\Braket{P(x)}=   \sum_{i=0}^{n-1}\lambda^{[i]}$,
particularly for Christoffel function we have: $\Braket{1/K(x)}=\sum_{i=0}^{n-1}\lambda^{[i]}=n$,
and in general case:
\begin{align}
  \Braket{f(x)P(x)}&=\sum_{i=0}^{n-1}\lambda^{[i]} \Braket{\psi^{[i]}|f|\psi^{[i]}}
 \label{intrho}
\end{align}
The (\ref{intrho}) is a representation of $\Braket{f(x)P(x)}$
integral as a sum of $f$--moments over the states of the density
matrix $\|\rho\|$ operator (\ref{rhoGEV}). This formula is a complementary one
to (\ref{fP}),
which is a representation of $\Braket{f(x)P(x)}$
integral as a sum of $P$--moments over the states of $\|f\|$ operator (\ref{GEV}).

Finally, we want to emphasize,
that used all of the above $\Braket{\psi}^2$  is a special
case of a density matrix. Consider $\|\rho\|=\Ket{1}\Bra{1}$, then
$\Braket{\psi}^2=\Braket{\psi|\rho|\psi}$,
and for an operator $\|f\|$, $\Braket{f}=\mathrm{Spur}\, \|f|\rho\|$
Similarly, a spur with a density matrix $\|\rho\|$,
e.g. corresponding to a polynomial $P(x)$,
can be used instead of all averages:
\begin{align}
  \Braket{f}&\to \mathrm{Spur}\, \|f|\rho\|
  \label{AverageReplacement}
\end{align}
This way the  approach we developed can be extended not only to polynomial by operator products study, but also
to operator--by--operator products. Then, instead of
$\mathrm{Spur}\, \|f|\rho\|$, which can be written either in (\ref{fP}) or in (\ref{intrho})
representation,
a general case of two operators $\mathrm{Spur}\, \|f|g\|$
can be considered. The first attempt to explore this direction is presented in 
\cite{ArxivMalyshkinJointDistribution}.

\section{\label{christoffelSpectrum}On The Christoffel Function Spectrum}
In the consideration above $f$ was  a given function
with finite moments $\Braket{Q_j | f |Q_k }$ in (\ref{QfQmoments}).
It's selection depends on the problem approached, for example
we used $f=x$ to obtain Gaussian quadrature (\ref{GEVgauss})
and $f=dC/dN$ for
Li--ion degradation rate study
in Fig. \ref{figLiIon}.
A question arise what the result we can expect if
the Christoffel function (\ref{christoffelfun})
is used as
$f(x)=K(x)=1\Big/\sum_{j,k=0}^{n-1}Q_j(x)G^{-1}_{jk}Q_k(x)$.
\begin{theorem}
  \label{theoremFeqChristoffel}
  If $f(x)$ is equal to the Christoffel function $K(x)$
  the eigenproblem
  \begin{align}
    \sum\limits_{k=0}^{n-1} \Braket{Q_j|K(x)|Q_k} \alpha^{[i]}_k &=
    \lambda^{[i]}_K \sum\limits_{k=0}^{n-1} \Braket{ Q_j|Q_k} \alpha^{[i]}_k
    \label{GEVchristoffelQQ} \\
    \psi^{[i]}_K(x)&=\sum\limits_{k=0}^{n-1} \alpha^{[i]}_k Q_k(x)
  \label{psiCchristoffel}
  \end{align}
  has the sum of all eigenvalues $\lambda^{[i]}_K$ 
   equals to the total measure:
\begin{align}
\Braket{1}&=\int d\mu=\sum_{i=0}^{n-1}\lambda^{[i]}_K
\label{Spurrhok}
\end{align}
\end{theorem}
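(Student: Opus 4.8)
The plan is to exploit the standard fact that for a generalized eigenvalue problem $A\alpha=\lambda G\alpha$ with positive definite $G$, the generalized eigenvalues coincide with the eigenvalues of $G^{-1}A$, so their sum equals $\mathrm{tr}\,(G^{-1}A)$. Here the left-hand matrix is $A_{jk}=\Braket{Q_j|K|Q_k}$ and the right-hand one is the Gram matrix $G_{jk}=\Braket{Q_j|Q_k}$, which is positive definite by the non-degeneracy assumption in (\ref{GEV}). The first step is therefore just to write
\[
  \sum_{i=0}^{n-1}\lambda^{[i]}_K=\mathrm{tr}\,\bigl(G^{-1}A\bigr)=\sum_{j,k=0}^{n-1}G^{-1}_{jk}\Braket{Q_k|K|Q_j}.
\]

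Next I would expand each matrix element through its integral definition, $\Braket{Q_k|K|Q_j}=\int d\mu\,Q_k(x)Q_j(x)K(x)$, and interchange the finite sum with the integral. This yields
\[
  \sum_{i=0}^{n-1}\lambda^{[i]}_K=\int d\mu\,K(x)\left[\sum_{j,k=0}^{n-1}Q_j(x)G^{-1}_{jk}Q_k(x)\right].
\]

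The decisive observation is that the bracketed quantity is exactly $1/K(x)$, by the very definition (\ref{christoffelfun}) of the Christoffel function. Hence the integrand collapses, $K(x)\cdot\bigl(1/K(x)\bigr)=1$, and the integral reduces to $\int d\mu=\Braket{1}$, which is the claimed identity.

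The only real content is recognizing this self-referential cancellation: taking the trace of $G^{-1}A$ automatically reconstructs the inverse Christoffel function $1/K(x)$ as a factor inside the integral, and it is precisely the choice $f=K$ that makes this factor cancel against the $K(x)$ sitting in the integrand. The remaining points are routine: the interchange of the finite sum and the integral is harmless, and the positive definiteness of $G$ (already assumed for (\ref{GEV})) guarantees both that $G^{-1}$ exists and that the eigenvalue sum equals the trace. I expect no genuine obstacle beyond spotting the cancellation.
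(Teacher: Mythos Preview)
Your proof is correct and follows essentially the same line as the paper's: both compute the sum of eigenvalues as a trace and then exploit the identity $K(x)\cdot\sum_{j,k}Q_j(x)G^{-1}_{jk}Q_k(x)=1$ to collapse the integral to $\Braket{1}$. The only cosmetic difference is that the paper expresses the trace in the orthonormal eigenbasis $\psi_K^{[i]}$ (writing $\sum_i\lambda_K^{[i]}=\sum_i\Braket{[\psi_K^{[i]}]^2K}$ and $1/K=\sum_i[\psi_K^{[i]}]^2$), whereas you stay in the original $Q_k$ basis and invoke $\mathrm{tr}(G^{-1}A)$; the underlying cancellation is identical.
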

\begin{proof}
  For a given $n$ Christoffel function $K(x)$ vanishes
  at large $x$ with $1/x^{2n-2}$ asymptotic,
  the integrals (\ref{QfQmoments}) are finite and (\ref{GEVchristoffelQQ})
  has a solution with eigenvalues $\lambda^{[i]}_K$ (possibly degenerated)
  and eigenfunctions $\psi^{[i]}_K(x)$.
  The Christoffel function (\ref{christoffelfun}) can be expressed
  in any orthogonal basis, take $\phi^{[i]}(x)=\psi^{[i]}_K(x)$.
  From $\lambda^{[i]}_K=\Braket{\psi^{[i]}_K|K(x)|\psi^{[i]}_K}=\Braket{\left[\psi^{[i]}_K(x)\right]^2K(x)}$
  and $K(x)=1\Big/\sum_{i=0}^{n-1} \left[\psi^{[i]}_K(x)\right]^2$
obtain $\Braket{1}=\sum_{i=0}^{n-1}\lambda^{[i]}_K$.
\end{proof}
The eigenfunctions (\ref{GEVgauss})  of a Gaussian quadrature
correspond to $x$--localized states, they are $\|x\|$ operator eigenfunctions
and the total weight is $\Braket{1}=\sum_{i=0}^{n-1}K(x^{[i]})$
with $w^{[i]}=K(x^{[i]})=\Braket{\psi^{[i]}}^2$; the $\psi^{[i]}(x)$ is
(\ref{GEVgauss}) eigenproblem solution.
The states $\psi^{[i]}_K(x)$  of (\ref{GEVchristoffelQQ}) eigenproblem
satisfy
 Theorem \ref{theoremFeqChristoffel}
and the Lebesgue quadrature weights sum (\ref{Lebweightssum}):
$\Braket{1}=\sum_{i=0}^{n-1}\Braket{\psi^{[i]}_K|K(x)|\psi^{[i]}_K}=
\sum_{i=0}^{n-1}\Braket{\psi^{[i]}_K}^2$.
However
an eigenvalue $\lambda^{[i]}_K$ of (\ref{GEVchristoffelQQ})
\textbf{is not equal} to the Lebesgue quadrature weight $\Braket{\psi^{[i]}_K(x)}^2$,
see (\ref{dMu2}) below.
A density matrix operator
can be constructed from (\ref{GEVchristoffelQQ}) eigenvalues and eigenfunctions:
\begin{align}
\rho_K(x,y)&=
\sum\limits_{i=0}^{n-1} \lambda^{[i]}_K \psi^{[i]}_K(x)\psi^{[i]}_K(y) =
    \sum\limits_{i=0}^{n-1} \Ket{\psi^{[i]}_K} \lambda^{[i]}_K \Bra{\psi^{[i]}_K} =\|\rho_K\|
\label{rhoChristoffel}
\end{align}
it is similar to ``regular average'' density matrix $\|\rho\|=\Ket{1}\Bra{1}$
considered in the Appendix \ref{densitymatrix}, e.g. both have
the same Spur (equals to total measure).
The (\ref{rhoChristoffel}) is the same as (\ref{rhoPol})
but the  eigenvalues/eigenfunctions are (\ref{GEVchristoffelQQ})
instead of (\ref{rhoGEV}).
The density matrix operator $\|\rho_K\|$
 corresponds to 
the Christoffel function $K(x)$.
The problem of averaging an operator $\|g\|$
with the Christoffel function used as a weight
is a difficult problem \cite{2015arXiv151107085G}.
The (\ref{rhoChristoffel}) allows this problem
to be approached directly: take the $\mathrm{Spur}\, \|g|\rho_K\|$.
A question arise about $\|\rho_K\|\Leftrightarrow K(x)$ mapping: whether
it is a one--to--one mapping or not?
For $1/K(x)$, a polynomial of $2n-2$ degree,
the mapping is (\ref{rhoPol}).
For $K(x)$ this requires a separate consideration.
Anyway, built from the Christoffel function
density matrix operator (\ref{rhoChristoffel})
allows us to consider an operator average with the
Christoffel function in a regular
``operatorish'' way: by taking a Spur of operators product.

Recent progress\cite{malyshkin2019radonnikodym}
in numerical computability
of Radon--Nikodym derivative
for multi--dimensional $\mathbf{x}$
allows us to demonstrate Theorem \ref{theoremFeqChristoffel} numerically.
Take a simple $d\mu=dx$ demonstration
measure of the  Appendix C of \cite{malyshkin2019radonnikodym}:
\begin{align}
  d\mu&=dx \label{totalmeasurerungeF} \\
  x&\in [-1:1] \nonumber
\end{align}
The file \texttt{\seqsplit{dataexamples/runge\_function.csv}}
is bundled with
\href{http://www.ioffe.ru/LNEPS/malyshkin/code_polynomials_quadratures.zip}{provided software}.
It has 10001 rows (the measure support is split to 10000 intervals)
and 9 columns.
In the first seven columns there are the
powers of $x$: $1,x,x^2,x^3,x^4,x^5,x^6$. Then, in the next two columns,
follow:
\href{https://en.wikipedia.org/wiki/Runge%27s_phenomenon}{Runge function}
$1/(1+25x^2)$ and the (\ref{totalmeasurerungeF}) weight.  
Run the program to obtain Christoffel function value for all observations
in data file   
(column indexes are base 0):
\begin{verbatim}
java com/polytechnik/utils/RN --data_cols=9:0,6:1:8:1 \
      --data_file_to_build_model_from=dataexamples/runge_function.csv
\end{verbatim}
Here as $f$ we use the $x$, the data is in the column with index $1$.
The Lebesgue quadrature then produces
the Gaussian quadrature for the measure (\ref{totalmeasurerungeF}):
\begin{equation}
  \begin{aligned}
x^{[0]}&= -0.9491080257215085 & w^{[0]}&= 0.1294848235792277 & w_K^{[0]}&= 0.11746154871932572 \\
x^{[1]}&= -0.7415313130354606 & w^{[1]}&= 0.279705429437816 & w_K^{[1]}&= 0.2794795769155739 \\
x^{[2]}&= -0.40584522389537203 & w^{[2]}&= 0.3818301175303132 & w_K^{[2]}&= 0.38911964330481996 \\
x^{[3]}&= 0 & w^{[3]}&= 0.41795925890484187 & w_K^{[3]}&= 0.42787846212051234 \\
x^{[4]}&= 0.405845223895157 & w^{[4]}&= 0.3818301175306451 & w_K^{[4]}&= 0.38911964330486587 \\
x^{[5]}&= 0.7415313130353846 & w^{[5]}&= 0.2797054294378024 & w_K^{[5]}&= 0.27947957691558917 \\
x^{[6]}&= 0.9491080257213823 & w^{[6]}&= 0.12948482357916594 & w_K^{[6]}&= 0.11746154871930853
\end{aligned}
  \label{dMu2Gauss}
\end{equation}
A small difference between (\ref{dMu2Gauss}) and exact values
of 7-point Gaussian quadrature for the measure (\ref{totalmeasurerungeF})
is due to the fact that
the moments calculation is not exact, they are calculated
from 10001 discrete points
in the file \texttt{\seqsplit{dataexamples/runge\_function.csv}}.
The Christoffel weights $w_K^{[i]}$ (\ref{ChristoffewlWeights}) are
close to $w^{[i]}$ in case $f=x$.
Created file \texttt{\seqsplit{runge\_function.csv.RN.csv}}
has 22 columns.
First column is the label,
next 7 columns are the powers of $x$ (copied from input), then $f=x$, weight,
Radon--Nikodym derivative (\ref{bnevai}) of $fd\mu$ and $d\mu$ (here $f=x$),
and the Christoffel function $K(x)$ (\ref{christoffelfun})
is in the column with index $12$; the other columns follow to total 22.
Run the program again using the
Christoffel function as $f$ (Christoffel function
is in the column with index 12; an alternative is to use \texttt{\seqsplit{--flag\_replace\_f\_by\_christoffel\_function=true}}):
\begin{verbatim}
java com/polytechnik/utils/RN --data_cols=22:1,7:12:9:0 \
      --data_file_to_build_model_from=runge_function.csv.RN.csv
or
java com/polytechnik/utils/RN --data_cols=9:0,6:1:8:1 \
      --flag_replace_f_by_christoffel_function=true \
      --data_file_to_build_model_from=dataexamples/runge_function.csv
\end{verbatim}
The output file
\texttt{\seqsplit{runge\_function.csv.RN.csv.RN.csv}}
now contains the eigenvalues $\lambda^{[i]}_K$ and
the Lebesgue weights $w^{[i]}$ for eigenproblem (\ref{GEVchristoffelQQ})
with the measure (\ref{totalmeasurerungeF}):
\begin{equation}
  \begin{aligned}
\lambda_K^{[0]}&= 0.10226835684407387 & w^{[0]}&= 0.16153573777120298 & w_K^{[0]}&= 0.10226835684403417 \\
\lambda_K^{[1]}&= 0.12057295282629424 & w^{[1]}&= 0 & w_K^{[1]}&= 0.12057295282626747 \\
\lambda_K^{[2]}&= 0.25910242661821975 & w^{[2]}&= 0.4476418241676696 & w_K^{[2]}&= 0.2591024266180915 \\
\lambda_K^{[3]}&= 0.2924778951810179 & w^{[3]}&= 0 & w_K^{[3]}&= 0.2924778951809419 \\
\lambda_K^{[4]}&= 0.37696956667653253 & w^{[4]}&= 0.6388507741017023 & w_K^{[4]}&= 0.37696956667633214 \\
\lambda_K^{[5]}&= 0.4079988698735509 & w^{[5]}&= 0 & w_K^{[5]}&= 0.40799886987353334 \\
\lambda_K^{[6]}&= 0.44060993198085746 & w^{[6]}&= 0.751971663959237 & w_K^{[6]}&= 0.44060993198079923
  \end{aligned}
  \label{dMu2}
\end{equation}
We see that for $f(x)=K(x)$ both: the eigenvalues sum and the Lebesgue quadrature weights sum
are equal to total measure, it is $2$ for (\ref{totalmeasurerungeF}).
Some of the Lebesgue quadrature weights are equal to $0$;
for (\ref{totalmeasurerungeF}) measure
Christoffel function is even, there are even and odd eigenfunctions,
the average of odd eigenfunctions is zero.
All Christoffel weights $w_K^{[i]}$ from (\ref{ChristoffewlWeights}) are non--zero
and coincide with $\lambda_K^{[i]}$ because $f(x)=K(x)$,
they will not coincide if optimal clustering to $D<n$ is performed with $\|\rho\|=\Ket{1}\Bra{1}$, see Appendix \ref{optimalClustering} below.

For  a given  $f(x)$ an eigenfunction $\psi^{[i]}(x)$
of eigenproblem (\ref{GEV}) 
may possibly produce zero weight in
the Lebesgue quadrature, this can be an inconvenient feature
in a practical situation.
The operator $\|\rho_K\|$  (\ref{rhoChristoffel}) allows us 
to introduce the ``Christoffel weights'' $w_K^{[i]}$, that are always positive.
The operator $\|\rho_K\|$  Spur  (\ref{Spurrhok}) is calculated in $\Ket{\psi^{[i]}_K}$ basis,
it is equal to total measure $\Braket{1}$. The Spur is invariant
with respect to basis transform, it  will be the same when written in
$\Ket{\psi^{[i]}}$ basis, (\ref{GEV}) eigenvectors.
\begin{align}
\Braket{1}&=\sum_{i=0}^{n-1} \Braket{\psi^{[i]}_K|\rho_K|\psi^{[i]}_K}=
\sum_{i=0}^{n-1} \Braket{\psi^{[i]}|\rho_K|\psi^{[i]}}
\label{SpurrhokBasis2}
\end{align}
Define ``Christoffel weights'' $w_K^{[i]}$
as an alternative to the ``Lebesgue weights'' $w^{[i]}=\Braket{\psi^{[i]}}^2$ (\ref{wiLeb})
\begin{align}
w_K^{[i]}&=\Braket{\psi^{[i]}|\rho_K|\psi^{[i]}}=\Braket{\psi^{[i]}|K(x)|\psi^{[i]}}
=\Braket{\frac{\left[\psi^{[i]}(x)\right]^2}{
\sum_{j=0}^{n-1}\left[\psi^{[j]}(x)\right]^2
}}
\label{ChristoffewlWeights}
\end{align}
The weights $w_K^{[i]}$ satisfy the same normalizing condition (\ref{SpurrhokBasis2})
as the Lebesgue weights normalizing (\ref{Lebweightssum}). In Fig. \ref{figLiIonWk}
the Christoffel weights are compared to (\ref{wiLeb}) weights.
One can see these weights are very close. However, the Christoffel weights $w_K^{[i]}$
have a property of being always positive and are related to Christoffel function operator $\|\rho_K\|$.
\begin{figure}[t]
\includegraphics[width=5cm]{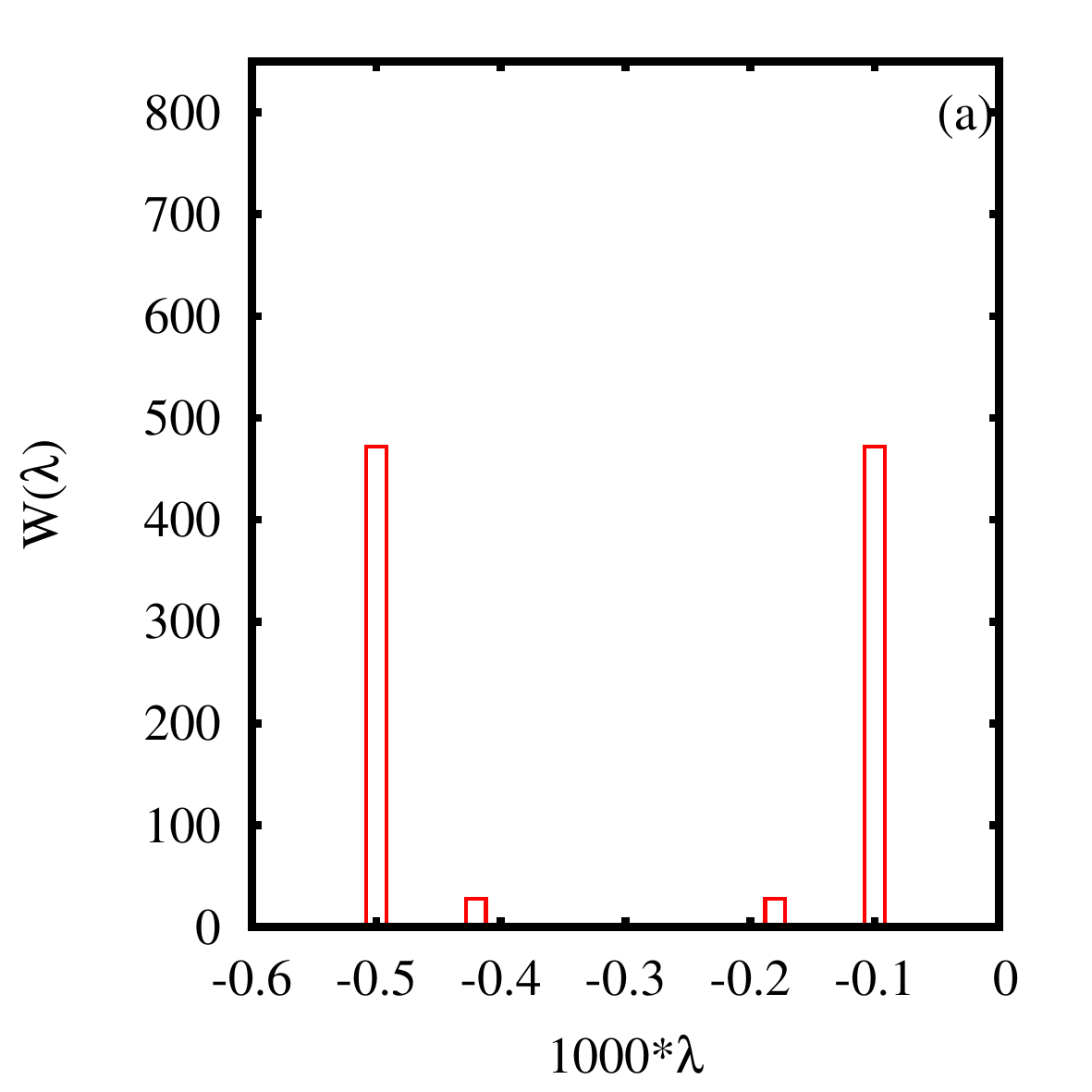}
%
\includegraphics[width=5cm]{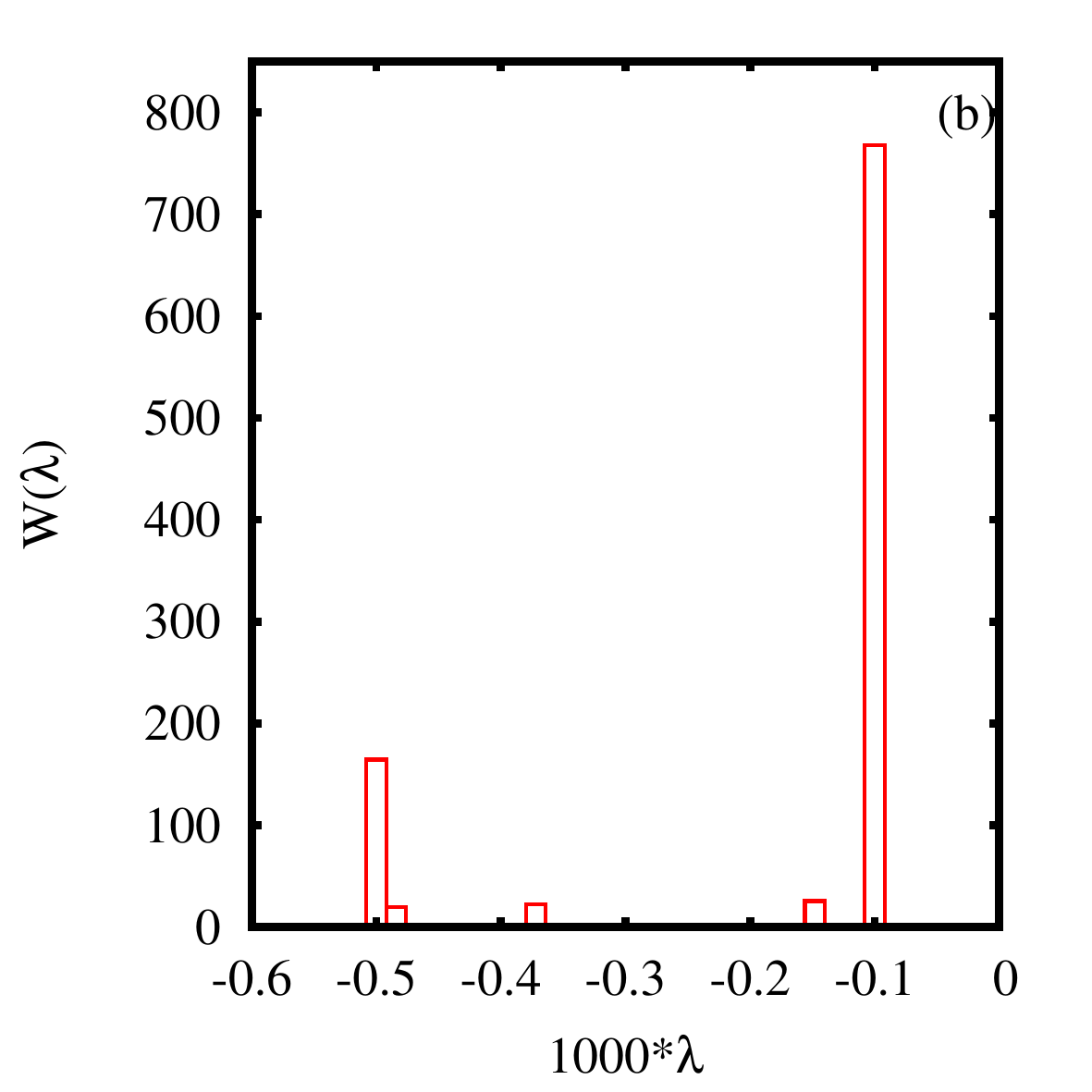}
 \caption{\label{figLiIonWk}
  The same model as in Fig. \ref{figLiIon},
  but with the Christoffel weights (\ref{ChristoffewlWeights})
  instead of (\ref{wiLeb}) weights; (a) corresponds to
\hyperref[figLiIon]{\ref*{figLiIon}c} and (b)
  corresponds to \hyperref[figLiIon]{\ref*{figLiIon}f}.
  As in Fig. \ref{figLiIon}
   the peak height corresponds exactly to the stage length because of chosen measure $d\mu=dN$.
   The calculations are performed for  $n=50$ in polynomial basis.   
  }
\end{figure}

The eigenvalues of (\ref{GEV})
are the Lebesgue integral (\ref{intLebesgue}) value--nodes $f^{[i]}$,
the weights are obtained from eigenfunction $\Ket{\psi^{[i]}}$ average.
As we emphasized above in (\ref{AverageReplacement}),
any average corresponds to some density matrix.
The $\|\rho\|=\Ket{1}\Bra{1}$ corresponds to  a ``regular'' average,
the Lebesgue weights then are: $w^{[i]}=\Braket{\psi^{[i]}|\rho|\psi^{[i]}}$.
The $\|\rho_K\|$ corresponds to ``Christoffel function average''
with the weights (\ref{ChristoffewlWeights}).

The calculation of ``Christoffel weights'' requires
\textbf{one more matrix} $\Braket{Q_j|K(x)|Q_k}$ to be calculated
from the data sample.
The cost to pay for the ``Christoffel weights'' is that the data sample now
should be processed twice:
\begin{itemize}
\item Construct $\Braket{Q_j|Q_k}$ and $\Braket{Q_j|f|Q_k}$.
\item
For every observation calculate Christoffel function $K(x)$ from the matrix $\Braket{Q_j|Q_k}$.
Build the matrix $\Braket{Q_j|K(x)|Q_k}$.
\end{itemize}
A second pass is required because Christoffel function
matrix elements $\Braket{Q_j|K(x)|Q_k}$ go beyond basis function products and should be
evaluated directly.
In addition to the matrix of outcomes  $\Braket{Q_j|f|Q_k}$
we now have a matrix of ``coverage'' $\Braket{Q_j|K(x)|Q_k}$
which is used to obtain
operator $\|\rho_K\|$,
corresponding to the Christoffel function $K(x)$.
The Christoffel function can be considered as
a ``proxy'' for coverage\cite{2015arXiv151107085G,lasserre2019empirical,beckermann2018perturbations}:
the number of observations that are ``close enough'' to a given $x$;
but it can estimate only the coverage of a ``localized'' at $x$ state,
not the coverage of a given state $\Ket{\psi}$.
In contradistinction to the Christoffel function $K(x)$,
the Christoffel function density matrix $\|\rho_K\|$ (\ref{rhoChristoffel})
can estimate the coverage of any given state $\Ket{\psi}$
as $\Braket{\psi|\rho_K|\psi}$; it is
not limited to localized states as the Christoffel function $K(x)$ is.

A uniqueness of the Lebesgue quadrature
makes it a very attractive tool for data analysis.
When a data analysis problem defines some $f$,
for example  Li--ion degradation rate
$f=dC/dN$ in Fig. \ref{figLiIon},
a class label in ML \cite{malyshkin2019radonnikodym},
gray intensity in image reconstruction\cite{2015arXiv151101887G}, etc.
the solution
$(\lambda^{[i]},\psi^{[i]})$
of
(\ref{GEV})
is unique and can be used as a basis for: PCA expansion
(\ref{evexpansionStdev}), $f$ distribution
estimation (\ref{wiLeb}) or (\ref{ChristoffewlWeights}),
optimal clustering of Appendix \ref{optimalClustering}, etc.
There is a setup where a function $f$
either cannot be defined or is a
\href{https://en.wikipedia.org/wiki/Multivalued_function}{multivalued function}
for which an eigenvalue problem cannot be formulated.
However, we still want to obtain a \textbf{unique} basis
that is constructed from the data sample,
for example to avoid
\href{https://en.wikipedia.org/wiki/Principal_component_analysis#Properties_and_limitations_of_PCA}{PCA dependence on attributes scale}.
In this case the most straightforward approach
is to take the Christoffel function as $f(x)=K(x)$.
This approach can be easily extended to a multi--dimensional $\mathbf{x}$,
see \cite{malyshkin2019radonnikodym}. An issue that often arise
in case of a multi--dimensional $\mathbf{x}$ is a degeneracy
of Gram matrix $G_{jk}=\Braket{Q_jQ_k}$. In the Appendix A
of \cite{malyshkin2019radonnikodym} a regularization algorithm
is presented, it needs to be applied to $\mathbf{x}$ to obtain
a regularized basis $\mathbf{X}$.
Then, in the regularized basis,
the Christoffel function (\ref{christoffelfun}) can be
calculated\footnote{
See the method \texttt{\seqsplit{com/polytechnik/utils/DataRegularized.java:getRNatXoriginal(double [] xorig).getChristoffelOatX()}}
of \href{http://www.ioffe.ru/LNEPS/malyshkin/code_polynomials_quadratures.zip}{provided software}
calculating the $1/K(\mathbf{x})$.},
the eigenproblem (\ref{GEVchristoffelQQ})
solved, and a unique basis $\psi^{[i]}_K(x)$ obtained!

\section{\label{optimalClustering}On The Optimal Clustering Problem With A Density Matrix Average}
The most noticeable result of our work
\cite{malyshkin2019radonnikodym}
is basis reduction algorithm, Section ``Optimal Clustering''.
For $n$ input attributes (such as $Q_k(x)$ or multi--dimensional $\mathbf{x}$)
construct $D\le n$ linear combinations of them $\psi_G^{[m]}(x)$, $m=0\dots D-1$,
that optimally
separate $f$ in terms of $\Braket{f\psi^2}/\Braket{\psi^2}$.
This solution is the key concept of our approach to data overfitting problem.
A sketch of \cite{malyshkin2019radonnikodym} theory:
\begin{itemize}
\item
Solve (\ref{GEV}), obtain $n$ pairs $(f^{[i]}=\lambda^{[i]},\psi^{[i]})$.
Introduce a measure $\Braket{\cdot}_L$
\begin{align}
  \Braket{g(f)}_L&=\sum_{i=0}^{n-1} g(f^{[i]})w^{[i]} \label{Lmeasure} \\
   w^{[i]}&=\Braket{\psi^{[i]}}^2 \label{LmeasureWeight}
\end{align}
\item Construct a $D$--point Gaussian quadrature in $f$--space with the measure $\Braket{\cdot}_L$,
obtain the functions
$\psi_G^{[m]}(f)$ in $f$--space (Eq. (\ref{GEVgauss}) of dimension $D$
with $f$ used instead of $x$).
The optimization problem in $f$--space is solved only once,
all the solutions in $x$--space are obtained from the $\psi_G^{[m]}(f)$.
This is different from \cite{marx2019tractable}
where for every given $x$ a conditional minimization
of the polynomial $1/K(\widetilde{x})$ is required:
for a fixed $x$ in $\widetilde{x}=(x,f)$ find the $f$ providing the minimum.
\item
Convert the optimal clustering  solution $\psi_G^{[m]}(f)$
from $f$--space to $x$--space, obtain $\psi_G^{[m]}(x)$. This
conversion is possible only because the Lebesgue weights (\ref{LmeasureWeight})
are used in (\ref{Lmeasure}).
\end{itemize}

The Lebesgue weights $w^{[i]}=\Braket{\psi^{[i]}}^2$ correspond
to a very specific form of the density matrix
$\|\rho\|=\Ket{1}\Bra{1}$ (a ``regular'' average), this density
matrix operator is a
\href{https://en.wikipedia.org/wiki/Quantum_state#Pure_states}{pure state}.
A question arise whether the optimal clustering success of Ref. \cite{malyshkin2019radonnikodym}
can be repeated with a more general form of the density matrix,
e.g. with the $\|\rho_K\|$ from (\ref{rhoChristoffel})?
Introduce a measure $\Braket{\cdot}_L$
\begin{align}
  \Braket{g(f)}_L&=\sum_{i=0}^{n-1} g(f^{[i]})  w^{[i]} \label{LmeasureDM} \\
  w^{[i]}&=\Braket{\psi^{[i]}|\rho|\psi^{[i]}} \label{LmeasureWeightDM}
\end{align}
The weights (\ref{LmeasureWeightDM}) is 
the most general form  of the Lebesgue weighs;
(\ref{wiLeb}) corresponds to $\|\rho\|=\Ket{1}\Bra{1}$.

As in \cite{malyshkin2019radonnikodym} a $D$--point Gaussian quadrature
can be constructed from (\ref{LmeasureDM}) measure,
the eigenfunctions $\psi_G^{[m]}(f)$ are (\ref{GEVgauss}) eigenvectors
with the
replace: $n\rightarrow D$ and $x\rightarrow f$.
They are orthogonal as 
\begin{subequations}
\label{psiGfOrt}
\begin{align}
\delta_{ms}&=\Braket{\psi_G^{[m]}(f)|\psi_G^{[s]}(f)}_L \\
\lambda_G^{[m]}\delta_{ms}&=\Braket{\psi_G^{[m]}(f)|f|\psi_G^{[s]}(f)}_L \\
w_G^{[m]}&=
\Braket{\psi_G^{[m]}}_L^2
=\frac{1}{\left[\psi_G^{[m]}(\lambda_G^{[m]})\right]^2}
\end{align}
\end{subequations}
The problem is to convert obtained optimal clustering solution
$\psi_G^{[m]}(f)$
from $f$ to $x$ space; $D$ eigenvalues are denoted  as $\lambda_G^{[m]}$
in order to not to mistake them with $n$ eigenvalues $f^{[i]}$ of (\ref{GEV}).
Introduce $D$ operators $\|\Psi_G^{[m]}\|$, ($m=0\dots D-1$; $i=0\dots n-1$):
\begin{align}
\|\Psi_G^{[m]}\|&=\sum\limits_{i=0}^{n-1}\Ket{\psi^{[i]}}
\psi_G^{[m]}(f^{[i]})\Bra{\psi^{[i]}} \label{psiDM} \\
\Braket{A}_{\rho}&=
\mathrm{Spur}\, \|A|\rho\| \label{psiDMaver}
\end{align}
In the basis of (\ref{GEV}) eigenproblem the operators
$\|\Psi_G^{[m]}\|$ are diagonal.
With (\ref{psiDMaver}) definition of average  the orthogonality relation for $\|\Psi_G^{[m]}\|$
with respect to $\Braket{\cdot}_{\rho}$ is the same as  (\ref{psiGfOrt})
for $\psi_G^{[m]}(f)$ with respect to the measure $\Braket{\cdot}_L$:
\begin{subequations}
\label{psiGOperOrt}
\begin{align}
\delta_{ms}&=\Braket{\Psi_G^{[m]}|\Psi_G^{[s]}}_{\rho} \\
\lambda_G^{[m]}\delta_{ms}&=\Braket{\Psi_G^{[m]}|f|\Psi_G^{[s]}}_{\rho} \\
w_G^{[m]}&=\Braket{\Psi_G^{[m]}}_{\rho}^2
\end{align}
\end{subequations}
For $\|\rho\|=\Ket{1}\Bra{1}$
the $\psi_G^{[m]}(x)$
of \cite{malyshkin2019radonnikodym} can be expressed via
the operators $\|\Psi^{[m]}_G\|$
\begin{align}
\Ket{\psi_G^{[m]}}&=\Ket{\Psi^{[m]}_G\Big|1} \label{psiHxConversion} \\
p^{[m]}(x)&=\left[\psi_G^{[m]}(x)\right]^2 \label{pm} \\
f_{RN}(x)&=
\frac{\sum\limits_{m=0}^{D-1} \lambda_G^{[m]} p^{[m]}(x)}
{\sum\limits_{m=0}^{D-1} p^{[m]}(x)} \label{fRN} \\
f_{RNW}(x)&=
\frac{\sum\limits_{m=0}^{D-1} \lambda_G^{[m]} p^{[m]}(x) w_G^{[m]}}
{\sum\limits_{m=0}^{D-1} p^{[m]}(x) w_G^{[m]}} \label{fRNW}
\end{align}
The optimal clustering states $\psi_G^{[m]}(f)$ can only be
converted to pure states in $x$--space $\psi_G^{[m]}(x)$ 
when the density matrix $\|\rho\|$ is of a pure state form $\Ket{\varphi}\Bra{\varphi}$,
otherwise the conversion to $x$--space produces mixed states
described by the operators $\|\Psi_G^{[m]}\|$.
While the $\psi_G^{[m]}(x)$ does not exist for a general $\|\rho\|$,
the $p^{[m]}(x)$ weight, required
to obtain Radon--Nikodym interpolation (\ref{fRN}) and
classification (\ref{fRNW}) solutions,
can always be obtained.
From (\ref{psiDM}) it follows that
\begin{align}
p^{[m]}(x)&=
\Braket{\psi_x\Big|\Psi_G^{[m]}|\rho|\Psi_G^{[m]}|\psi_x}=
\sum\limits_{i,j=0}^n
\psi^{[i]}(x)\psi^{[j]}(x)
\psi_G^{[m]}(f^{[i]})\psi_G^{[m]}(f^{[j]})\Braket{\psi^{[i]}|\rho|\psi^{[j]}}
\label{pmDM}
\end{align}
For $\|\rho\|=\Ket{1}\Bra{1}$ (\ref{pmDM}) becomes (\ref{pm}).
A very important feature of the Radon--Nikodym approach (\ref{fRN})
is that it can be generalized to the density matrix states.
The $\left[\psi_G^{[m]}(x)\right]^2$ used as
an eigenvalue weight needs to be replaced
by a more general form (\ref{pmDM}). Thus
all the optimal clustering
results of Ref. \cite{malyshkin2019radonnikodym}
are now generalized from the weights (\ref{wiLeb}) to the weights
$\Braket{\psi^{[i]}|\rho|\psi^{[i]}}$, described by a density matrix
$\|\rho\|$ of the most general form, e.g. by the
Christoffel function density matrix (\ref{rhoChristoffel}).

\section{\label{RNFDFusage}Usage Example of \texttt{{com/polytechnik/algorithms/ExampleRadonNikodym\_F\_and\_DF.java}}}
The \texttt{\seqsplit{com/polytechnik/algorithms/ExampleRadonNikodym\_F\_and\_DF.java}}
is a program processing 1D data.
It was used in \cite{2016arXiv161107386V} to obtain  relaxation rate distribution. In contrast with advanced multi--dimensional
approach of \cite{malyshkin2019radonnikodym},
this program has a rigid interface and limited functionality.
It is bundled with \href{http://www.ioffe.ru/LNEPS/malyshkin/code_polynomials_quadratures.zip}{provided software}.
Usage example to reproduce Fig. \ref{figLiIon} data:
Create a two--stage linear model of Fig.
\hyperref[figLiIon]{\ref*{figLiIon}d} with 800:200 lengths,
save the model to  \texttt{\seqsplit{slope\_800\_200.csv}}.
\begin{verbatim}
java com/polytechnik/algorithms/PrintFunTwoLinearStages \
     slope_800_200.csv 10000 1000 800 1e-4 5e-4 0
\end{verbatim}
Solve (\ref{GEV}) for $f=dC/dN$ ($f=C$ is also calculated).
Use $n=50$ and the data from \texttt{\seqsplit{slope\_800\_200.csv}}.
\begin{verbatim}
java com/polytechnik/algorithms/ExampleRadonNikodym_F_and_DF \
     slope_800_200.csv 50 sampleDX
\end{verbatim}
The files  \texttt{\seqsplit{slope\_800\_200.csv.QQdf\_QQ\_spectrum.dat}}
and \texttt{\seqsplit{slope\_800\_200.csv.QQdf\_QQ\_spectrum.dat}}
are generated. They correspond to $f=dC/dN$ and to $f=C$ respectively.
The files contain 5 columns: eigenvalue index, eigenvalue $\lambda^{[i]}$,
$x_{\psi^{[i]}}=\Braket{\psi^{[i]}|x|\psi^{[i]}}/\Braket{\psi^{[i]}|\psi^{[i]}}$,
$w^{[i]}$ weight (\ref{wiLeb}),
and $w_K^{[i]}$ weight (\ref{ChristoffewlWeights}).
The data can be grouped to 25 bins of $\lambda^{[i]}$
(the column with index 1)
to produce Fig.
\hyperref[figLiIon]{\ref*{figLiIon}f}
(the weight is in the column with index 3)
and Fig. \hyperref[figLiIonWk]{\ref*{figLiIonWk}b}
(the weight is in the column with index 4).
\begin{verbatim}
java com/polytechnik/algorithms/HistogramDistribution \
     slope_800_200.csv.QQdf_QQ_spectrum.dat 5:1:3 25 >W_800_200.csv
java com/polytechnik/algorithms/HistogramDistribution \
     slope_800_200.csv.QQdf_QQ_spectrum.dat 5:1:4 25 >WK_800_200.csv
\end{verbatim}

\bibliography{LD}

\end{document}